\pgfplotsset{compat=1.14}
\pgfplotsset{every x tick label/.append style={font=\footnotesize, yshift=0.6ex}}
\pgfplotsset{every y tick label/.append style={font=\footnotesize, xshift=0.5ex}}
\DeclareMathOperator{\Var}{Var}
\DeclareMathOperator{\Cov}{Cov}
\DeclareMathOperator{\fBm}{fBm}
\newcommand{\N}{\mathbb{N}}
\newcommand{\Z}{\mathbb{Z}}
\newcommand{\R}{\mathbb{R}}
\renewcommand{\P}{\mathrm{P}}
\newcommand{\cC}{\mathcal{C}}
\newcommand{\E}{\mathrm{E}}
\newcommand{\1}{\mathbb{1}}
\renewcommand{\d}{{\rm d}}
\newcommand{\e}{{\rm e}}
\renewcommand{\ge}{\geqslant}
\renewcommand{\le}{\leqslant}
\definecolor{CYL}{rgb}{0.3,0.1,0.1}
\definecolor{DK}{rgb}{0.5,0.3,0.5}
\author[D. Khoshnevisan and C.Y. Lee]{Davar Khoshnevisan \and Cheuk Yin Lee}
\address{Department of Mathematics, The University of Utah, Salt Lake City, Utah 84112-0090,
	USA}
\email{davar@math.utah.edu}
\address{School of Science and Engineering, The Chinese University of Hong Kong (Shenzhen), Longgang,
	Shenzhen, Guangdong, 518172, P.R. China}
\email{leecheukyin@cuhk.edu.cn}
\title[Self-similar Gaussian processes on curved boundaries]{On the passage times of self-similar
	Gaussian processes on curved boundaries}
	\thanks{Research supported in part by the National Science Foundation grant DMS-2245242, a research startup fund of the Chinese University of Hong Kong, Shenzhen, and the Shenzhen Peacock fund 2025TC0013.}
\date{Last update: October 30, 2025}
\newtheorem{stat}{Statement}[section]
\newtheorem{proposition}[stat]{Proposition}
\newtheorem{corollary}[stat]{Corollary}
\newtheorem{theorem}[stat]{Theorem}
\newtheorem{lemma}[stat]{Lemma}
\theoremstyle{definition}
\newtheorem{OP}[stat]{Open Problem}
\newtheorem{example}[stat]{Example}
\numberwithin{equation}{section}
\date{October 30, 2025}
\keywords{Boundary crossing probabilities, self-similarity, Gaussian processes, strong local non-determinism}
\subjclass{60G15; 60G18}
\begin{document}
\maketitle
%----------------------------------------------------------------------------------------------------
\setcounter{tocdepth}{3}% to get subsubsections in toc
\let\oldtocsection=\tocsection
\let\oldtocsubsection=\tocsubsection
\let\oldtocsubsubsection=\tocsubsubsection
%----------------------------------------------------------------------------------------------------

\renewcommand{\tocsection}[2]{\hspace{0em}\oldtocsection{#1}{#2}}
\renewcommand{\tocsubsection}[2]{\hspace{2.5em}\oldtocsubsection{#1}{#2}}
%\tableofcontents
\begin{abstract} 
	Let $T_{c,\beta}$ denote the smallest $t\ge1$
	that a continuous, self-similar Gaussian process with self-similarity
	index $\alpha>0$ moves at least $\pm  c t^\beta$ units.
	We prove that: (i) If $\beta>\alpha$, then 
	$T_{c,\beta}=\infty$ with positive probability;
	(ii) If $\beta<\alpha$ and $X$ is strongly locally nondeterministic 
	in the sense of Pitt (1978), then $T_{c,\beta}$
	has moments of all order;  and 
	(iii) If $\beta=\alpha$ and $X$ is strongly locally nondeterministic in the sense of Pitt (1978), then there exists a continuous,
	strictly decreasing  function 
	$\lambda:(0\,,\infty)\to(0\,,\infty)$ such that $\mathrm{E}(T_{c,\beta}^\mu)$ is finite
	when $0<\mu<\lambda(c)$ and infinite when $\mu>\lambda(c)$. Together these results
	extend a celebrated theorem of Breiman (1967)
	and Shepp (1967) for passage times
	of a Brownian motion on the critical square-root boundary.
	We briefly discuss two examples: One about fractional Brownian motion, and another
	about a family of linear stochastic partial differential equations.\end{abstract}

\section{Introduction}

Choose and fix a number $c>0$,
let $w=\{w_n\}_{n=1}^\infty$ denote the simple symmetric random walk on $\Z$, and define
$\tau_c  =\inf\{n\in\N:\, |w_n|>c \sqrt n,\ c\sqrt n >1\}$, where
$\inf\varnothing=\infty$. It is not hard to see that
$\P\{\tau_c <\infty\}=1$ for every $c>0$. This follows for example from the law of the iterated
logarithm (LIL). Blackwell and Freedman \cite{BlackwellFreedman} proved  that
$\E(\tau_c)<\infty $ if and only if $c<1.$
Moreover, $w$ can be replaced for example by any random walk that
satisfies $\E(w_1)=0$
and $\Var(w_1)=1$; see Gundy and Siegmund  \cite{GundySiegmund}.

This type of result has been studied in the continuum
as well. Let $W$ denote a standard
1-D Brownian motion, and define
\[
	T_c = \inf\left\{t\ge1:\, |W(t)| > c t^{1/2}\right\}
	\qquad\forall c>0.
\]
By the LIL, $T_c<\infty$ a.s.\ for every $c>0$.  
Breiman \cite{Breiman} and Shepp \cite{Shepp} proved 
at the same time that, for every $c,\mu>0$,
$\E(T_c^\mu)<\infty$ if and only if $c<\mathcal{z}(\mu)$,
where $\mathcal{z}(\mu)$ denotes the smallest positive root
of $x\mapsto M(-\mu\,,1/2\,, x^2/2)$ where $M$ denotes
Kummer's function. The function $\mu\mapsto\mathcal{z}(\mu)$ cannot be evaluated
explicitly but it is known to have a continuous inverse $c\mapsto\mathcal{z}^{-1}(c)$, and
\begin{equation}\label{z(mu)}\begin{split}
	\mathcal{z}(\mu)\sim \mu^{-1/2} \text{ as }\mu\to\infty,\quad&
		\mathcal{z}(\mu) \sim \sqrt{2|\log\mu|}\text{ as }\mu\to0^+,\\
	\mathcal{z}^{-1}(c) \sim c^{-2} \text{ as }c\to0^+,\quad&
		\mathcal{z}^{-1}(c) = \e^{-c^2(1+\mathcal{o}(1))/2}
		\text{ as }c\to\infty.
\end{split}\end{equation}
Moreover, some special evaluations of $\mathcal{z}$ are also known exactly
\cite{Breiman,Shepp}; for instance,
\begin{equation}\label{z(1)z(2)}\textstyle
	\mathcal{z}(1)=1
	\quad\text{and}\quad
	\mathcal{z}(2)= \sqrt{3-\sqrt 6}.
\end{equation}
Novikov \cite{Novikov} presents
exact formulas for closely related passage times. And
Breiman \cite{Breiman} also derived an invariance principle for a large
class of random walks in the domain of attraction of $W$ that include
the simple walk. In particular, Breiman's theory implies that
$\E(\tau_c^\mu)<\infty$ if and only if $c<\mathcal{z}(\mu).$
Breiman's theorem includes the theorem of Black and Freedman 
\cite{BlackwellFreedman} thanks to \eqref{z(1)z(2)}. It also includes
the fact that
$\E(\tau_c^2)<\infty$ if and only if $c<\sqrt{3-\sqrt 6}\approx 0.742,$
established originally by Chow and Teicher \cite{CT}.
See also Teicher and Zhang \cite{TeicherZhang} together with their
detailed bibliography to the previous work on this topic.
Alili and Patie \cite{AliliPatie} discuss some of the latest developments
of passage times of Brownian motion on curved boundaries.

In addition to their consequences in theoretical statistics, passage-time problems 
are also intimately connected to the foundations of 
martingale theory. Notably, Davis \cite{Davis76} found 
the optimal constant in the celebrated Burkholder-Davis-Gundy inequality in part by 
establishing the extremality of the stopping times $\{T_c\}_{c>0}$. Finally, we
mention that such passage
times also play a vital role in the deep
analysis of slow points of the increments of Brownian motion and 
related processes \cite{BassBurdzy96,Davis83,
	DavisPerkins85,GreenwoodPerkinsA,GreenwoodPerkinsB,
	Perkins83,Verzani95}.

In this paper, we aim to study the passage times of self-similar Gaussian processes on
curved boundaries. 
With this in mind, from now on we let $X=\{X(t)\}_{t\ge0}$ denote a centered Gaussian process
with continuous sample functions, and we assume throughout
that $X$ is  \emph{$\alpha$-self similar} for
some number $\alpha>0$ whose value is fixed from now on. Throughout,
$X$ is assumed also to be non degenerate; that is, 
$\Var[X(1)]>0$. It might help
to recall that $\alpha$-self similarity means that the law of $\{r^{-\alpha}X(rt)\}_{t\ge0}$ does not depend on
$r>0$ \cite{Kolmogorov,MandelbrotVanNess}.
This notion was studied in depth by
Lamperti \cite{Lamperti62,Lamperti67,Lamperti72} who alternatively referred to self-similar processes
as ``semi stable.'' For  Markov processes, the subject of self-similarity  has 
been revisited more recently with remarkable advances. The bibliography
has pointers to portions of that literature that are somewhat related to the topic of the present work
\cite{BarczyDoring,BertoinKorchemski,BertoinYor2002A,BertoinYor2002B,CaballeroChaumont,
	CP,CKP,CKPR,DDK,DT,Doring,DoringBarczy,GnedinPitman,GV,KimSongVondracek,KMR,
	LingWang,MicloPatieSarkar,Pardo,PardoRivero,Patie,Rivero2003,Rivero2005,Rivero2007,
	Vidmar,Xiao}.
Self-similar Gaussian processes have also been the subject of many impactful papers a 
representative collection of which can also be found in the references
\cite{CDG,Coeurjolly,Dobrushin,Funaki,GoodmanKuelbs,GHN,HHX,
	HN2017,HN2018,KalbasiMountford,KentWood,MM,MasonXiao,
	MS,Molchan,Muraoka,NX,SM,Song,ZPMPGR}.

Recall  that $X$ is
\emph{strongly locally non-deterministic} (SLND, for short) 
\cite{Berman1978,Berman1987,MonradPitt,CuzickDupreez,Xiao1996,Pitt78} if and only if there exists a number $\mathscr{l}_{_X}>0$ such that
\[
	\Var\left( X(t) \mid X(r);\, 0<r<s\right)
	\ge \mathscr{l}_{_X} (t-s)^{2\alpha},
	\tag{\textsc{slnd}}\label{SLND}
\]
uniformly for all $0<s<t$. Strong local non-determinism is not always
assumed. Whenever we assume that $X$ satisfies \eqref{SLND} 
we explicitly mention that assumption.

The following summarizes what we currently know about the passage times of 
self-similar, continuous Gaussian processes on curved boundaries of
the form $ct^\beta$, and can be viewed as a nontrivial generalization
of the results of Breiman \cite{Breiman} and Shepp \cite{Shepp}, valid
in the absence of both the theories of Markov processes and martingales.

Consider the following stopping times of the Gaussian process $X$:
\[
	T_{c,\beta} = \inf\{ t\ge1:\ |X(t)| > ct^\beta\}
	\qquad\forall c,\beta>0.
\]
The following is the main result of this paper. The super- and the sub-critical 
cases of the next result [parts (1) and (2)] are related
to recent testing methodologies that are proposed in
clinical trials \cite{Lai,LaiETAL,ZhangLai} when
$X$ denotes a fractional Brownian motion.
	
\begin{theorem}\label{th:main}
	\begin{compactenum}[\rm (1)]
	\item If $\beta>\alpha$, then
		$0<\P\{T_{c,\beta}=\infty\}<1$
		for all $c>0$;
	\item If $0<\beta<\alpha$ and \eqref{SLND} holds, then
		$\E\e^{A|\log T_{c,\beta}|^2 } <\infty$
		for all $c>0$ and $0<A<(\alpha-\beta)/(4\beta^2)$.
%		sufficiently small $A=A(c\,,\beta)>0$. 
		Consequently, $\E(T_{c,\beta}^\mu)<\infty$ for all $c,\mu>0$ in this case; 
	\item If $\beta=\alpha$, then for every $c>0$
		there exists $\lambda(c)=\lambda(c\,,X)\ge0$ such that
		$\P\{ T_{c,\alpha}>t\} = t^{-\lambda(c)+o(1)}$ as $t\to\infty$.
		In particular,
		$\E(T_{c,\alpha}^\mu)<\infty$ if $0<\mu<\lambda(c)$, and
		$\E(T_{c,\alpha}^\mu)=\infty$ if $\mu>\lambda(c)$.
		Furthermore, $\lambda$ is non increasing, convex, and 
		continuous on $(0\,,\infty)$, and $\lambda(c)\to0$ as $c\to\infty$; and
	\item Under \eqref{SLND}, $\lambda(c)>0$ for all $c>0$, 
		$\lambda(c) \to \infty$ as $c\to0^+$, and $\lambda$ is strictly decreasing on $(0\,,\infty)$.
	\end{compactenum}
\end{theorem}

We might refer to the function $\lambda$ as the \emph{critical boundary crossing exponent}
of $X$, or ``exponent'' for short.
In the special case that $X$ denotes Brownian motion [$\alpha=1/2$],
$\lambda(c)=\mathcal{z}^{-1}(c)$ for every $c>0$.
We do not know if $\lambda=\mathcal{z}^{-1}$ for other $\frac12$-self similar
Gaussian processes of the type studied here, and can present only partial results in this direction
in \S\ref{sec:comparison}. Nevertheless, 
we are able to offer the following which shows the genericity of 
aspects of \eqref{z(mu)}.

The following is new even for $\alpha=\frac12$ since
Brownian motion is not the only centered, $\frac12$-self similar, continuous
Gaussian process. 

\begin{corollary}\label{cor:main}
	As $c\to\infty$, 
	$c^{-2}\log\lambda(c) \le -(1+\mathcal{o}(1))(2\Var[X(1)])^{-1}$.
	Furthermore: \begin{compactenum}
	\item If there exist $K,\delta>0$ such that
		$\E(|X(t)-X(s)|^2)\le K|t-s|^{2\delta}$ for all $s,t\in[0\,,1]$, then
		$\lambda(c) =\mathcal{O}( c^{-1/\delta})$
		as $c\to0^+$;
	\item Under \eqref{SLND}, 
		$\lambda(c) \gtrsim  c^{-1/\alpha}$ as $c\to0^+$ and
		$c^{-2} \log \lambda(c) \ge -(1+\mathcal{o}(1))(2\mathscr{l}_{_X})^{-1}$
		as $c\to\infty$.
	\end{compactenum}
\end{corollary}

Theorem \ref{th:main} and Corollary \ref{cor:main}
will be proved in \S\ref{proof} below, followed by some comments
on aspects of the theory that might be needed to estimate 
the exponent $\lambda$ by simulation.

\section{Proofs of Theorem \ref{th:main} and Corollary \ref{cor:main}}\label{proof}

We verify Theorem \ref{th:main} in order,
first in the supercritical case [$\beta>\alpha$]. Then, we
prove the remainder of theorem first
in the subcritical case [$\beta<\alpha$], followed by
the critical case $[\beta=\alpha]$, both under
the condition \eqref{SLND}. Corollary \ref{cor:main}
is then proved at the end of this section.

\subsection{The supercritical case} 

We begin by proving part (1) of Theorem \ref{th:main}. Namely, 
\begin{proposition}\label{pr:supercritical}
	$\P\{T_{c,\beta}=\infty\}\in(0\,,1)$ for all $c>0$ and $\beta>\alpha$.
\end{proposition}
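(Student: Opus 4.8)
The plan is to show that with positive probability the process $X$ never catches up to the boundary $ct^\beta$ for $t \ge 1$, while also (for the upper bound $\P\{T_{c,\beta}=\infty\}<1$) observing that with positive probability $X$ does cross. The second, easier half follows because $|X(1)|$ is an unbounded Gaussian random variable, so $\P\{|X(1)| > c\} > 0$, and on that event $T_{c,\beta} = 1$; hence $\P\{T_{c,\beta} < \infty\} \ge \P\{|X(1)|>c\} > 0$, i.e.\ $\P\{T_{c,\beta}=\infty\}<1$.

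For the main half, $\P\{T_{c,\beta}=\infty\}>0$, the key idea is to exploit self-similarity together with a union bound over dyadic blocks $[2^n, 2^{n+1}]$. On such a block, $\alpha$-self similarity gives $\sup_{t\in[2^n,2^{n+1}]}|X(t)| \stackrel{d}{=} 2^{n\alpha}\sup_{s\in[1,2]}|X(s)|$, a random variable with finite (indeed all) exponential moments by Borell--TIS or Fernique, since $X$ has continuous paths on the compact interval $[1,2]$. The boundary on that block is at least $c\,2^{n\beta}$. Therefore
\[
	\P\left\{ \sup_{t\in[2^n,2^{n+1}]}|X(t)| > c\,2^{n\beta} \right\}
	= \P\left\{ \sup_{s\in[1,2]}|X(s)| > c\,2^{n(\beta-\alpha)} \right\}.
\]
Writing $M := \sup_{s\in[1,2]}|X(s)|$ and using that $\beta - \alpha > 0$, the right-hand side is $\P\{M > c\,2^{n(\beta-\alpha)}\}$, which by a Gaussian tail bound decays at least like $\exp(-\kappa\,4^{n(\beta-\alpha)})$ for some $\kappa>0$ once $n$ is large; in particular it is summable in $n$. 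By Borel--Cantelli, almost surely only finitely many blocks see a crossing, so the "tail" event $E_N := \{\sup_{t \ge 2^N}|X(t)| \le c t^\beta \text{ for all such } t\}$ has $\P(E_N) \to 1$ as $N\to\infty$. Fix $N$ with $\P(E_N) > 1/2$.

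It remains to control the finitely many initial blocks $[1,2^N]$. One clean way: on the event $E_N$, no crossing happens after time $2^N$, so $\{T_{c,\beta}=\infty\}$ fails only if $\sup_{t\in[1,2^N]}|X(t)| > c\,t^\beta$ for some $t$, which is implied by $\sup_{t\in[1,2^N]}|X(t)| > c$ (since $t^\beta \ge 1$ there). Thus
\[
	\P\{T_{c,\beta}=\infty\} \ge \P\left( E_N \cap \left\{ \sup_{t\in[1,2^N]}|X(t)| \le c \right\} \right).
\]
Here is the one genuine subtlety: the event $\{\sup_{[1,2^N]}|X(t)| \le c\}$ need not have positive probability for small $c$, because $X(1)$ might be forced away from $0$. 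This is where I would use \emph{non-degeneracy} more carefully together with the Gaussian small-ball / support property: the closed support of the law of $X$ in $C[1,2^N]$ contains a neighborhood of any function in its (affine) support, and in particular — since $X$ is centered and $\Var[X(1)]>0$ — one checks that the constant function $0$ lies in the support of $X$ restricted to $[1,2^N]$, so $\P\{\sup_{[1,2^N]}|X| \le c\} > 0$ for every $c>0$. (Equivalently, one may replace the sup-norm ball by a suitable cylinder event and invoke that finite-dimensional Gaussian densities are strictly positive; some care is needed if $\Var[X(t)]$ degenerates at isolated $t$, but on $[1,2^N]$ self-similarity forces $\Var[X(t)] = t^{2\alpha}\Var[X(1)] > 0$, so there is no degeneracy.) Then, since $\P(E_N) > 1/2$ and $\P\{\sup_{[1,2^N]}|X|\le c\}=:p>0$, Bonferroni gives $\P\{T_{c,\beta}=\infty\} \ge p - 1/2$; if this is not positive, enlarge $N$ so that $\P(E_N) > 1 - p/2$, yielding $\P\{T_{c,\beta}=\infty\} \ge p/2 > 0$.

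The main obstacle is precisely the last paragraph: combining the Borel--Cantelli tail estimate with a lower bound on the small-ball probability over the compact initial interval, and making sure no degeneracy of $\Var[X(t)]$ spoils the positivity of that small-ball probability. The Borel--Cantelli step itself is routine given Gaussian concentration and $\beta>\alpha$.
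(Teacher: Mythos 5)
Your tail estimate (dyadic blocks, self-similarity, Borell--TIS, a union bound/Borel--Cantelli) is correct and is essentially the paper's Lemma \ref{lem:LIL}, and your use of the Gaussian support theorem to get $\P\{\sup_{t\in[1,2^N]}|X(t)|\le c\}>0$ is a legitimate alternative to the paper's Lemma \ref{lem:pos}. The genuine gap is in the final gluing step. You bound $\P\{T_{c,\beta}=\infty\}\ge\P(E_N)+p_N-1$ with $p_N:=\P\{\sup_{t\in[1,2^N]}|X(t)|\le c\}$, and when $p_N-1/2\le0$ you propose to ``enlarge $N$ so that $\P(E_N)>1-p/2$.'' But $p$ is not a constant: replacing $N$ by $N'>N$ replaces $p_N$ by $p_{N'}\le p_N$, which may be far smaller, and the Bonferroni bound becomes $p_{N'}-p_N/2$, which need not be positive. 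Both sequences tend monotonically to their limits ($p_N\downarrow0$ since $\sup_{t\ge1}|X(t)|=\infty$ a.s., and $1-\P(E_N)\downarrow0$), and nothing in your argument rules out $p_N\le 1-\P(E_N)$ for \emph{every} $N$: the support theorem gives no quantitative lower bound on $p_N$ at all, while any subdivision-type lower bound is of order $\exp(-C2^N)$, which for small $\beta-\alpha$ is much smaller than the union-bound tail $\sum_{n\ge N}\exp(-\kappa 4^{n(\beta-\alpha)})\approx\exp(-\kappa 2^{2N(\beta-\alpha)})$. So Bonferroni cannot close the argument.

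What is needed is a positive-correlation input. The two events $\{|X(t)|\le ct^\beta\ \forall t\in[1,2^N]\}$ and $E_N$ are symmetric convex sets for the Gaussian law of $X$, so the Gaussian correlation inequality of Royen gives $\P(\text{intersection})\ge\P(\cdot)\,\P(\cdot)>0$ with no rate comparison required. This is precisely how the paper glues the two halves (and also how it proves positivity on the initial interval, by subdividing $[1,m]$ and multiplying the pieces). With that one substitution your proof goes through; as written, the combination step fails. Your argument for $\P\{T_{c,\beta}=\infty\}<1$ is fine.
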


The proof hinges on two simple lemmas, the first of which is

\begin{lemma}\label{lem:LIL}
	$\lim_{t\to\infty}\P\left\{ |X(s)| \le  cs^\beta \ \forall s\ge t\right\}=1$
	for every $c>0$ and $\beta>\alpha$.
\end{lemma}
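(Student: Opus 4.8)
The plan is to break the half-line $[t\,,\infty)$ into the dyadic blocks $[2^k\,,2^{k+1}]$, $k\ge N(t)$, and to use $\alpha$-self similarity to convert the estimate on each block into a single tail bound for $M:=\sup_{u\in[1,2]}|X(u)|$, the supremum of $|X|$ over the fixed reference interval $[1\,,2]$. Since $X$ has continuous sample functions, $M<\infty$ a.s.; and since $X$ is Gaussian with $\sup_{u\in[1,2]}\Var[X(u)] = 2^{2\alpha}\Var[X(1)]<\infty$ (the equality by self similarity), Fernique's theorem — equivalently the Borell--Tsirelson--Ibragimov--Sudakov inequality — yields $\E(M)<\infty$, and in fact a Gaussian-type tail for $M$. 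This input about $M$ is the only ingredient that is not completely elementary, and I expect it to be the crux; everything afterwards is a routine union bound.

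For the rescaling step, fix an integer $k\ge1$. By $\alpha$-self similarity, $\{X(2^ku)\}_{u\ge0}\stackrel{d}{=}\{2^{k\alpha}X(u)\}_{u\ge0}$, and because $(2^ku)^\beta\ge 2^{k\beta}$ whenever $u\ge1$, writing $\gamma:=\beta-\alpha>0$ we obtain
\[
	\P\left\{\exists\,s\in[2^k\,,2^{k+1}]:\ |X(s)|>cs^\beta\right\}
	\le \P\left\{\sup_{u\in[1,2]}2^{k\alpha}|X(u)|>c\,2^{k\beta}\right\}
	= \P\left\{M>c\,2^{k\gamma}\right\}.
\]
Here the inequality uses the inclusion of events coming from $(2^ku)^\beta\ge 2^{k\beta}$, and the last equality uses the distributional identity together with the fact that the supremum of the continuous process $|X|$ on $[1\,,2]$ is attained.

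Finally, given $t\ge1$, choose $N=N(t)\in\N$ with $2^N\le t<2^{N+1}$, so $N(t)\to\infty$ as $t\to\infty$. Since $[t\,,\infty)\subseteq\bigcup_{k\ge N}[2^k\,,2^{k+1}]$, a union bound, the previous display, and Markov's inequality give
\[
	\P\left\{\exists\,s\ge t:\ |X(s)|>cs^\beta\right\}
	\le \sum_{k=N}^\infty \P\left\{M>c\,2^{k\gamma}\right\}
	\le \frac{\E(M)}{c}\sum_{k=N}^\infty 2^{-k\gamma}.
\]
The right-hand side is the tail of a convergent geometric series, hence tends to $0$ as $N\to\infty$, i.e.\ as $t\to\infty$. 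Passing to complementary events gives $\P\{|X(s)|\le cs^\beta\ \text{for all }s\ge t\}\to1$, which is exactly the claim. (The same estimate combined with the first Borel--Cantelli lemma also yields $\limsup_{s\to\infty}|X(s)|\,s^{-\beta}=0$ a.s., a slightly stronger statement that could be recorded if convenient.)
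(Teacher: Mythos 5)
Your proof is correct. The key nontrivial input --- that the supremum $M$ of a continuous centered Gaussian process over a compact interval has finite expectation (Fernique/Borell--TIS) --- is exactly the same crux the paper relies on, and the rescaling identity, the union bound over dyadic blocks, and the Markov estimate are all sound; the geometric tail $\sum_{k\ge N}2^{-k\gamma}$ does the rest.

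That said, your route is genuinely lighter than the paper's. The paper first establishes the law of the iterated logarithm
\[
	\limsup_{t\to\infty}\frac{|X(t)|}{t^\alpha\sqrt{\log\log t}}\le\sqrt{2\Var[X(1)]}
	\quad\text{a.s.},
\]
and then observes that $t^\alpha\sqrt{\log\log t}=\mathcal{o}(t^\beta)$ forces the desired convergence of probabilities. Proving that sharper almost-sure statement requires the full Gaussian concentration of measure (Borell's inequality around the mean $Mt^\alpha$) to get a tail decaying like $(\log t)^{-c/(2\Var[X(1)])}$, which is just barely summable along a geometric sequence. You instead exploit the \emph{polynomial} gap $\beta-\alpha>0$ directly, so a first-moment Markov bound $\P\{M>c\,2^{k\gamma}\}\le\E(M)/(c\,2^{k\gamma})$ already yields a convergent geometric series; no concentration inequality is needed. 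The trade-off is that the paper's intermediate LIL is a stronger and independently interesting statement (with the sharp constant), whereas your argument proves only what the lemma asks for --- but since the LIL is not reused elsewhere in the paper, your more economical argument loses nothing for the purposes of Proposition \ref{pr:supercritical}. Your closing remark that Borel--Cantelli upgrades the estimate to $\limsup_{s\to\infty}|X(s)|s^{-\beta}=0$ a.s.\ is also correct.
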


\begin{proof}
	It suffices to prove the following law of the iterated logarithm (hereforth, LIL): With probability one,
	$\limsup_{t\to\infty} |X(t)|/(t^\alpha\sqrt{\log\log t})\le \sqrt{2\Var[X(1)]}.$
	There is a large literature on the law of the iterated logarithm for
	Gaussian processes that includes many variations and improvements
	of the above LIL. For the sake of completeness,
	we present a short modern proof that 
	does not require any additional technical assumptions.

	Let $S(t) = \sup_{s\in[0,t]}|X(s)|$ for all $t>0$.
	According to Talagrand's regularity theorem \cite{Talagrand}, the continuity of $X$
	ensures that $M=\E|S(1)|<\infty$, whence
	$\E|S(t)|=Mt^\alpha$ for all $t>0$ by scaling.
	Thus, it follows from concentration of measure -- more precisely
	Borell's inequality \cite{Borell} -- that, for every
	$t,z>0$,
	\[
		\P\{ |S(t)| \ge Mt^\alpha +z\}
		\le2\exp\left( -\frac{z^2}{2\sup_{s\in[0,t]}\Var[X(s)]}\right)
		= 2\exp\left( -\frac{z^2}{2t^{2\alpha}\Var[X(1)]}\right),
	\]
	thanks to scaling. This proves that for every fixed $c>0$,
	\[
		\P\left\{ |S(t)| \ge t^\alpha\sqrt{c\log\log t}\right\}
		\le\exp\left( - \frac{(c+ \mathcal{o}(1))\log\log t}{2\Var[X(1)]}\right)\qquad
		\text{as $t\to\infty$}.
	\]
	We apply this for any $c>2\Var[X(1)]$ to deduce
	the stated LIL from the Borel-Cantelli lemma.
\end{proof}

The following is the remaining lemma we need for the proof of Proposition
\ref{pr:supercritical}.

\begin{lemma}\label{lem:pos}
	$\P\{ |X(s)| \le c \ \forall s\in[a\,,b]\}>0$ for every $c>0$ and $b>a>0$.
\end{lemma}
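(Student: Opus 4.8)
The plan is to reduce the assertion to the positivity of a small–ball probability, and then to invoke the fact that the zero function lies in the topological support of a continuous, centred Gaussian process. Since $[a\,,b]\subseteq[0\,,b]$, monotonicity gives
\[
	\P\{|X(s)|\le c\ \forall s\in[a\,,b]\}
	\ge \P\Big\{ \sup_{s\in[0,b]}|X(s)| \le c\Big\},
\]
and $\alpha$-self similarity -- applied with $r=b$, so that $\{b^{-\alpha}X(bs)\}_{s\ge0}\stackrel{d}{=}\{X(s)\}_{s\ge0}$ -- identifies the right-hand side with $\P\{\sup_{s\in[0,1]}|X(s)|\le cb^{-\alpha}\}$. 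Hence it suffices to prove that $\P\{\sup_{s\in[0,1]}|X(s)|\le\varepsilon\}>0$ for every $\varepsilon>0$; equivalently, that every ball centred at the origin of $C([0\,,1])$ is charged by the distribution of the $C([0\,,1])$-valued Gaussian random element $s\mapsto X(s)$.

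This last fact is classical: the topological support of a centred Gaussian Borel measure on a separable Banach space equals the closure of its Cameron--Martin space, which is a linear subspace and so contains $0$; see, e.g., Bogachev, \emph{Gaussian Measures}. In the spirit of the self-contained proof of \eqref{LIL} given above one may also argue directly. By the It\^o--Nisio theorem there exist i.i.d.\ standard normal random variables $\{\xi_n\}_{n\ge1}$ and functions $\phi_n\in C([0\,,1])$ with $X(s)=\sum_{n\ge1}\xi_n\phi_n(s)$, the series converging uniformly on $[0\,,1]$ almost surely. Since the remainder $R_N:=\sum_{n>N}\xi_n\phi_n$ tends to $0$ uniformly a.s., fix $N$ so large that $\P\{\sup_{[0,1]}|R_N|\le\varepsilon/2\}\ge\tfrac12$, and then fix $\delta>0$ with $\delta\sum_{n=1}^N\|\phi_n\|_\infty\le\varepsilon/2$. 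As $(\xi_1,\dots,\xi_N)$ and $R_N$ are independent,
\[
	\P\Big\{ \sup_{s\in[0,1]}|X(s)| \le \varepsilon\Big\}
	\ge \P\{|\xi_n|\le\delta \text{ for } 1\le n\le N\}\cdot
	\P\Big\{ \sup_{s\in[0,1]}|R_N| \le \tfrac{\varepsilon}{2}\Big\}
	\ge \tfrac12\,\P\{|\xi_n|\le\delta\ \forall\,n\le N\} >0,
\]
which completes the reduction.

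There is no genuine obstacle here, since the statement is purely qualitative and no quantitative small-ball estimate is required. The only points that call for a little care are that the restriction of $X$ to a compact interval is a bona fide Gaussian random element of a separable Banach space (so that the support/series machinery applies, exactly as continuity was used via Talagrand's theorem in Lemma \ref{lem:LIL}), and that one uses a series representation with \emph{almost sure uniform} convergence -- which is precisely what It\^o--Nisio supplies, or what the Karhunen--Lo\`eve expansion supplies together with a continuity argument. The self-similarity step above is merely cosmetic: one could instead apply the support statement directly to $X$ on $[a\,,b]$.
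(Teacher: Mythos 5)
Your proof is correct, but it follows a genuinely different route from the paper's. You reduce the statement, via monotonicity and scaling, to the positivity of the small-ball probability $\P\{\sup_{s\in[0,1]}|X(s)|\le\varepsilon\}$, and settle that either by the support theorem for Gaussian measures (the support is the closure of the Cameron--Martin space, a linear subspace containing $0$) or, self-containedly, by an It\^o--Nisio series expansion $X=\sum_n\xi_n\phi_n$ with almost surely uniform convergence, truncating the series and using the independence of $(\xi_1,\dots,\xi_N)$ from the tail. The paper instead keeps the interval $[a\,,b]$, partitions it into subintervals on which the oscillation of $X$ is small with high probability, bounds each one-point probability $\P\{|X(t_i)|\le c/2\}$ from below using non-degeneracy and scaling, and then glues the subintervals together with the Gaussian correlation inequality of Royen. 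Your argument is cleaner and avoids the correlation inequality entirely, at the cost of invoking the support theorem or the a.s.\ uniform convergence of a Gaussian series (both classical, but requiring that the restriction of $X$ to a compact interval be treated as a Gaussian random element of the separable Banach space $C([0\,,1])$, as you correctly note). The paper's partition argument has the mild advantage of producing an explicit lower bound of the form $(\Delta/2)^n$ and of reusing the correlation inequality, which is already a running tool in the rest of the paper (Proposition \ref{pr:supercritical}, Proposition \ref{pr:X}). Both proofs are complete; the one point in yours that deserves the emphasis you already give it is that the convergence of the series must be almost surely uniform in $s$, not merely in $L^2$ for each fixed $s$, so that $\P\{\sup_{[0,1]}|R_N|\le\varepsilon/2\}\to1$.
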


\begin{proof}
	Choose and fix $a,b,c>0$ as in the statement of the lemma. Evidently,
	\[
		\Delta := \inf_{s\in[a,b]}\P\{|X(s)|\le c/2\} = 
		\inf_{s\in[a,b]}\P\{|Z|\le c/(2\tau s^\alpha)\} 
		= \P\{|Z| \le c/(2\tau b^\alpha)\}
		>0,
	\]
	where $\tau^2=\Var[X(1)]$,
	and $Z$ has a standard normal distribution. Next, we use continuity
	in order to ensure the existence of a number $n\in\N$
	such that
	\[\textstyle
		\mathscr{E}:=\P\left\{ \sup_{s,t\in[a,b]:\ |t-s|\le 1/n}
		|X(t) - X(s)| \ge c/2 \right\} \le \Delta/2.
	\]
	Define $t_i = a + i(b-a)n^{-1}$ for all $i=0,\ldots,n$, and observe that
	\begin{align*}
		\Delta \le \P\left\{ |X(t_i)|\le c/2 \right\}
		&\le \P\left\{ |X(s)|\le c \ \forall 
		s\in[t_i\,,t_{i+1}]\right\} + \mathscr{E}\\
		&\le \P\left\{ |X(s)|\le c \ \forall 
		s\in[t_i\,,t_{i+1}]\right\} + \frac{\Delta}{2},
	\end{align*}
	for all $i=1,\ldots,n-1$, whence
	$\min_{0\le i\le n-1}\P\{ |X(s)|\le c \ \forall  s\in[t_i\,,t_{i+1}]\}
	\ge \Delta/2.$
	Consequently, the Gaussian correlation inequality \cite{Royen,LatalaMatlak} 
	implies that $\P\{ |X(s)|\le c\ \forall s\in[a\,,b]\} $ is greater than or equal to
	$\prod_{i=0}^{n-1}\P\{ |X(s)|\le c \ \forall  s\in[t_i\,,t_{i+1}]\}\ge (\Delta/2)^n>0.$
	This completes the proof.
\end{proof}
 
 We are ready to demonstrate Proposition \ref{pr:supercritical}, thereby
 conclude the section.
 
 \begin{proof}[Proof of Proposition \ref{pr:supercritical}]
 	By the Gaussian correlation inequality
	\cite{Royen,LatalaMatlak}, for all $\beta>\alpha$, $m\ge1$, $c>0$,
 	\begin{align*}
	 	\P\{T_{c,\beta} =\infty\}	&=\P\left\{ |X(s)| \le cs^\beta\
			\forall s\ge1\right\}\\
			&\ge \P\left\{ |X(s)| \le cs^\beta\ \forall s\in[1\,,m]\right\}\cdot
			\P\left\{ |X(s)| \le c s^\beta\ \forall s\ge m\right\}\\
		&\ge \P\left\{ |X(s)| \le c\ \forall s\in[1\,,m]\right\}\cdot
			\P\left\{ |X(s)| \le c s^\beta\ \forall s\ge m\right\}.
	\end{align*}
	Choose and fix a sufficiently large $m\gg1$ in order to deduce
	from Lemma \ref{lem:LIL} that
	$\P\{ |X(s)| \le c s^\beta\ \forall s\ge m\}\ge\frac12.$
	These observations and Lemma \ref{lem:pos} together prove that
	$\P\{T_{c,\beta}=\infty\}>0$. It therefore remains to prove
	that $\P\{T_{c,\beta}=\infty\}<1$, but that follows easily
	from the fact that
	$\P\{T_{c,\beta}<\infty\}\ge \P\{|X(2)|>c2^\beta\}$ is strictly 
	positive since $X(2)$ has a non-degenerate centered normal distribution.
	This concludes the proof.
 \end{proof}
 
\subsection{The subcritical case}

The following implies part (2) of Theorem \ref{th:main}.

\begin{proposition}\label{pr:subcritical}
	If $c>0$, $\beta\in(0\,,\alpha)$ and
	\eqref{SLND} holds, then $\E\e^{A|\log T_{c,\beta}|^2}<\infty$ for all  
	$0<A<\frac{\alpha-\beta}{4\beta^2}$.
\end{proposition}

\begin{proof}
	We aim to prove that for all $c>0$ and $\beta\in(0\,,\alpha)$,
	\begin{equation}\label{tail:goal}
		\P\{T_{c,\beta} > t\}
		\le \exp\left\{-\left(\frac{\alpha-\beta}{4\beta^2}\right)(\log t)^2
		+\mathcal{O}(\log t)\right\}\qquad\text{as $t\to\infty$}.
	\end{equation}
	It is not hard to see that this yields the result.
	
	Let $Y(t)=\e^{-t\alpha}X(\e^t)$ for all $t\in\R$.
	Let $\mathcal{X}(t)$
	denote the $\sigma$-algebra generated by $X(s)$ for $s\in[0\,,\e^t]$.
	Then, the process $Y$ is adapted to the filtration 
	$\{\mathcal{X}(\e^t)\}_{t\ge0}$, and for every integer $j\ge1$,
	\begin{align*}
		&\P\left\{ |X(\e^j)| \le c\,\e^{j\beta} \mid \mathcal{X}(\e^{j-1})\right\} = 
			\P\{ |Y(j)|\le c\,\e^{-(\alpha-\beta)j} \mid \mathcal{X}(\e^{j-1}) \}\\
		&=\frac{1}{\sqrt{2\pi\Var[Y(j)\mid \mathcal{X}(\e^{j-1})]}}
			\int_{-c\exp\{-(\alpha-\beta)j\}}^{c\exp\{-(\alpha-\beta)j\}}\exp\left(
			-\frac{\left| x-\E[Y(j)\mid\mathcal{X}(\e^{j-1})] \right|^2}{
			\Var[Y(j)\mid\mathcal{X}(\e^{j-1})]}\right)\d x\\
		&\le \frac{2c \exp\{-(\alpha-\beta)j\}}{\sqrt{2\pi\Var[Y(j)\mid \mathcal{X}(\e^{j-1})]}}.
	\end{align*}
	Thanks to \eqref{SLND},
	\begin{align*}
		\Var[Y(j)\mid\mathcal{X}(\e^{j-1})] &= \e^{-2j\alpha}
		\Var[X(\e^j)\mid\mathcal{X}(\e^{j-1})]\\
		&\ge\mathscr{l}_{_X}\e^{-2j\alpha}\left(\e^j-\e^{j-1}\right)^{2\alpha}
		=\mathscr{l}_{_X}(1-\e^{-1})^{2\alpha}.
	\end{align*}
	Therefore, for every integer $j\ge1$,
	\[
		\P\left\{ |X(\e^j)| \le c\,\e^{j\beta} \mid \mathcal{X}(\e^{j-1})\right\}
		\le \frac{c\sqrt{2/(\mathscr{l}_{_X}\pi)}}{(1-\e^{-1})^\alpha}\,
		\e^{-(\alpha-\beta)j},
	\]
	almost surely. We now apply successive conditioning in order
	to find that
	\begin{align*}
		&\P\{T_{c,\beta}>\e^{m\beta}\}%=\P\{|X(s)| \le cs^\beta\ \forall s\le\e^{m\beta}\} 
			\le \P\left\{ |X(\e^j)|\le c\,\e^{j\beta}\ \forall 1\le j\le m\right\}\\
		&\le \prod_{j=1}^m \left( \frac{c\sqrt{2/(\mathscr{l}_{_X}\pi)}}{(1-\e^{-1})^\alpha}\,
			\e^{-(\alpha-\beta)j}\right)
%			= \left( \frac{\sqrt{2/(\mathscr{l}_{_X}\pi)}}{(1-\e^{-1})^\alpha}\right)^m
%			\e^{-(\alpha-\beta)\sum_{j=1}^m j}\\
%		&
		= \left( \frac{c\sqrt{2/(\mathscr{l}_{_X}\pi)}}{(1-\e^{-1})^\alpha}\right)^m
			\e^{-(\alpha-\beta)m(m+1)/2}.
	\end{align*}
	Set $K = 2\log_+ ( c\sqrt{2/(\mathscr{l}_{_X}\pi)} (1-\e^{-1})^{-\alpha}).$
	If $n\ge\e^\beta$ is an integer then there exists a unique integer $m\ge1$ such that
	$\e^{m\beta} \le n\le \e^{(m+1)\beta},$
	whence, as $n\to\infty$,
	\begin{align*}
		&\P\{T_{c,\beta}>n\} \le \P\{T_{c,\beta}>\e^{m\beta}\}\\
		&\le  \exp\left( -\frac{(\alpha-\beta)m(m+1)-Km}{2}\right)
		= \exp\left( -\frac{(\alpha-\beta)(\log n)^2}{2\beta^2} 
		+ \mathcal{O}(\log n)\right).
	\end{align*}
	This yields \eqref{tail:goal} as $t\to\infty$ along integers.
	Since $\log t \sim \log\lfloor t\rfloor$ as $t\to\infty$, 
	\eqref{tail:goal} holds as $t\to\infty$ along reals as well.
	This completes the proof.
\end{proof}

 \subsection{The critical case} 
 We have already proved parts (1) and (2) of Theorem
 \ref{th:main}. The remaining assertions of Theorem \ref{th:main}
 are included in the following.

\begin{proposition}\label{pr:X}
	Even without  \eqref{SLND},
	\[\textstyle
		\lambda(c) = -\lim\limits_{t\to\infty} (\log t)^{-1}\log
		\P\left\{{ \sup_{s\in[b,t]} ( |X(s)|/s^\alpha)} \le c\right\}
	\]
	exists for every $b,c>0$, is finite, and does not depend on $b$. 
	Moreover, $\lambda(c)$ satisfies $\lim_{c\to\infty} \lambda(c)=0$, and $\lambda$ is a 
	non increasing, convex, continuous function on $(0\,,\infty)$.
	Under \eqref{SLND}, $\lambda(c)>0$
	for all $c>0$, $\lim_{c\downarrow0} \lambda(c)=\infty$, and $\lambda$ is strictly decreasing.
\end{proposition}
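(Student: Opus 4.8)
The plan is to work with the time-changed process $Y(t)=\e^{-t\alpha}X(\e^t)$ and the transformed boundary level: setting $b=\e^0=1$ for concreteness (the independence of $b$ will follow afterwards by comparing the events over $[b,t]$ and $[1,t]$ and using that $X$ is bounded on any compact interval), the event $\{\sup_{s\in[1,t]}|X(s)|/s^\alpha\le c\}$ becomes $\{\sup_{u\in[0,\log t]}|Y(u)|\le c\}$. By $\alpha$-self-similarity, $Y$ is a stationary Gaussian process (in law $Y(u)\stackrel d= X(1)$ for every $u$, and more generally $Y$ has stationary increments-free finite-dimensional distributions depending only on differences). So I would define $g(T)=-\log\P\{\sup_{u\in[0,T]}|Y(u)|\le c\}$ and prove that $g$ is subadditive: $g(T_1+T_2)\le g(T_1)+g(T_2)$. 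The key input here is the Gaussian correlation inequality (Royen \cite{Royen}, cf.\ \cite{LatalaMatlak}): the event over $[0,T_1+T_2]$ is the intersection of the event over $[0,T_1]$ and the (stationarity-shifted) event over $[0,T_2]$, each a symmetric convex set in the path space, so its probability is at least the product. Fekete's subadditivity lemma then gives $\lambda(c)=\lim_{T\to\infty}g(T)/T=\inf_T g(T)/T\in[0,\infty]$, and one checks $\lambda(c)<\infty$ because $g(1)<\infty$ (the one-interval event has positive probability by Lemma~\ref{lem:pos} after rescaling). Translating back, $\lambda(c)=-\lim_{t\to\infty}(\log t)^{-1}\log\P\{\sup_{s\in[1,t]}|X(s)|/s^\alpha\le c\}$.

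For the qualitative properties of $c\mapsto\lambda(c)$, monotonicity is immediate: the event is increasing in $c$, so $g(T;c)$ is nonincreasing in $c$, hence so is $\lambda$. For convexity, I would fix $T$ and show $c\mapsto g(T;c)=-\log\P\{Y/c\in K_T\}$ is convex, where $K_T$ is the (fixed, symmetric, convex) set $\{\|\cdot\|_{\infty,[0,T]}\le 1\}$; this is a classical consequence of the Prékopa--Leindler / log-concavity of Gaussian measure, namely that $r\mapsto-\log\gamma(rK)$ is convex on $(0,\infty)$ for $K$ symmetric convex (equivalently, apply it with $r=c$). A pointwise limit of convex functions is convex, so $\lambda$ is convex on $(0,\infty)$; being convex and finite it is automatically continuous there. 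For $\lim_{c\to\infty}\lambda(c)=0$: since $\lambda$ is nonincreasing the limit exists in $[0,\infty)$, and it suffices to bound $g(T;c)$ by something $o(T)$ uniformly once $c$ is large; in fact $g(1;c)\to0$ as $c\to\infty$ since $\P\{\sup_{[0,1]}|Y|\le c\}\to1$, and by subadditivity $\lambda(c)\le g(1;c)$, so $\lambda(c)\to0$.

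Under the additional hypothesis \eqref{SLND}, the conditional-variance lower bound from the proof of Proposition~\ref{pr:subcritical} (specialized to $\beta=\alpha$, i.e.\ the same successive-conditioning estimate $\P\{|Y(j)|\le c\mid\mathcal X(\e^{j-1})\}\le C(c)$ with $C(c)=c\sqrt{2/(\mathscr l_{_X}\pi)}(1-\e^{-1})^{-\alpha}$ up to a constant) gives $g(m;c)\ge m\log(1/C(c))$ whenever $C(c)<1$, hence $\lambda(c)\ge\log(1/C(c))>0$ for small $c$; since $\lambda$ is nonincreasing this forces $\lambda(c)>0$ for all $c>0$, and letting $c\to0^+$ gives $\lambda(c)\to\infty$. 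Finally, strict monotonicity: a convex, nonincreasing function that is not strictly decreasing on $(0,\infty)$ must be eventually constant, say $\lambda\equiv \ell$ on $[c_0,\infty)$; but we have just shown $\lambda(c)\to0$ as $c\to\infty$, so $\ell=0$, i.e.\ $\lambda(c)=0$ for all $c\ge c_0$, contradicting $\lambda(c)>0$ for all $c>0$. Hence $\lambda$ is strictly decreasing.

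I expect the main obstacle to be the careful setup of the subadditivity argument in infinite-dimensional path space — verifying that $\{\sup_{[0,T]}|Y|\le c\}$ is a genuine (measurable, symmetric, convex, and ideally closed) set to which Royen's inequality applies, and that the two time-blocks can be made exactly independent-shaped via stationarity — together with quoting the right form of the Gaussian convexity lemma ($r\mapsto-\log\gamma(rK)$ convex) for the convexity of $\lambda$. The quantitative limits ($\lambda(c)\to0$, and under \eqref{SLND} $\lambda(c)\to\infty$) and strict monotonicity are then soft consequences.
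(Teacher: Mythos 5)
Your overall route is the same as the paper's: Lamperti transform to a stationary Gaussian process $Y$, supermultiplicativity of $f(T)=\P\{\sup_{[0,T]}|Y|\le c\}$ via the Gaussian correlation inequality, Fekete's lemma, Borell's log-concavity for convexity of $\lambda$, and the convexity-plus-$\lambda(\infty)=0$ argument for strict decrease. Most of those steps are fine (your use of Lemma \ref{lem:pos} to get $f(1)>0$ is a legitimate alternative to the paper's supermultiplicativity-plus-continuity argument). However, there is one genuine gap.

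Your proof that $\lambda(c)>0$ for \emph{all} $c>0$ under \eqref{SLND} does not work as written. The successive-conditioning bound you quote is the density bound
\[
	\P\left\{ |Y(j)|\le c \mid \mathcal{Y}(j-1)\right\}
	\le \frac{2c}{\sqrt{2\pi\,\Var[Y(j)\mid\mathcal{Y}(j-1)]}} \le C(c) := \text{const}\cdot c,
\]
which is nontrivial only when $C(c)<1$, i.e.\ only for small $c$. You then assert that ``since $\lambda$ is nonincreasing this forces $\lambda(c)>0$ for all $c>0$,'' but monotonicity runs the wrong way: a nonincreasing (even convex) function that is positive for small $c$ can perfectly well vanish for large $c$ (e.g.\ $c\mapsto\max(1-c,0)$). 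So positivity for large $c$ is exactly what remains to be proved, and it is also the input your strict-decrease argument needs. The paper closes this by not linearizing: using unimodality of the conditional Gaussian law (Anderson's inequality) one gets
\[
	\P\left\{ |Y(n)|\le c \mid \mathcal{Y}(n-1)\right\}
	\le \P\left\{ |Z|\le \frac{c}{\sqrt{\mathscr{l}_{_X}}\,(1-\e^{-1})^{\alpha}}\right\} < 1
	\qquad\text{for every } c>0,
\]
which after successive conditioning yields $\lambda(c)\ge -\log\P\{|Z|\le c\,\mathscr{l}_{_X}^{-1/2}(1-\e^{-1})^{-\alpha}\}>0$ for all $c$, and also gives $\lambda(c)\to\infty$ as $c\downarrow0$. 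Replace your density bound by this one and the rest of your argument (including strict monotonicity) goes through.
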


\begin{proof}
	First of all we may note that $X(t)\to0$ in probability as $t\downarrow0$ 
	because of scaling. Therefore the continuity of $X$ ensures that $X(0)=0$.
	Now we follow Lamperti \cite{Lamperti62}
	and define a process $Y=\{Y(t)\}_{t\in[-\infty,\infty)}$ as follows:
	\begin{equation}\label{Y}
		Y(-\infty)=0\quad\text{and}\quad
		Y(t) = \e^{-t\alpha} X(\e^t)\qquad\forall t\in\R.
	\end{equation}
	The process $Y$ is the Ornstein-Uhlenbeck process associated to $X$,
	and made a brief appearance earlier in the proof of 
	Proposition \ref{pr:subcritical}.
	A direct computation yields
	$\E[Y(t)Y(s)]  = \e^{-\alpha |t-s|}\E[X(\e^{|t-s|})X(1)]$
	for all $s,t\in\R.$
	Therefore, it follows that $\{Y(r)\}_{r\in(-\infty,\infty)}$ is a stationary,
	time-reversible Gaussian process, and that $Y$ has continuous sample functions
	on $[-\infty\,,\infty)$. Now,
	\begin{align*}\textstyle
		\P\left\{\sup_{s\in[b,t]} \left( |X(s)|/s^\alpha\right) \le c\right\}
			&\textstyle=\P\left\{\sup_{r\in[\log b,\log t]}|Y(r)|\le c\right\}\\
		&\textstyle=\P\left\{\sup_{r\in[0,\log(t/b)]}|Y(r)|\le c\right\},
	\end{align*}
	for all $t>b>0$ and $c>0$.
	Choose and fix a number $c>0$ and define
	$f(t) = \P\{ |Y(r)|\le c\ \forall r\in[0\,,t]\}$ $[t\ge 0].$
	Thanks to the Gaussian correlation inequality \cite{Royen,LatalaMatlak}, 
	$f$ satisfies the following
	for all $s,t>0$:
	\begin{align*}\textstyle
		f(t+s) 
%			&\textstyle= \P\left\{\sup_{r\in[0,t]}|Y(r)|\le c \,, 
%			\sup_{r\in[t,t+s]}|Y(r)|\le c\right\}\\
%		&\textstyle\ge \P\left\{\sup_{r\in[0,t]}|Y(r)|\le c\right\}
%			\P\left\{\sup_{r\in[t,t+s]}|Y(r)|\le c\right\}
%			\\&\textstyle
		\ge \P\left\{\sup_{r\in[0,t]}|Y(r)|\le c\right\}
		\P\left\{\sup_{r\in[t,t+s]}|Y(r)|\le c\right\} = f(t)f(s),
	\end{align*}
	where we have used stationarity in the last identity. 
	That is, $f$ is supermultiplicative.  
	
	Next, let us suppose that
	$f(t)=0$ for some $t>0$. Because $f$ is supermultiplicative, 
	$f(t)\ge [f(t/2)]^2$ and hence $f(t/2)=0$. Repeat this inductively to see that
	$f(t/2^n)=0$ for every integer $n\ge0$. It follows from the continuity of $Y$ that
	$0 = \lim_{n\to\infty} f(t/2^n) = \P\{|Y(0)|\le c\}.$
	This cannot be since $Y(0)$ has a non-degenerate centered normal
	distribution. Therefore, it follows that $f(t)>0$ for all $t>0$. That and the supermultiplicative
	property of $f$ together show that $-\log f$ is subadditive, whence
	$\lambda(c) = -\lim_{t\to\infty} t^{-1}\log f(t) = -\sup_{t\ge1} t^{-1}\log f(t),$
	thanks to the Fekete lemma, generally ascribed to \cite{Fekete}. We can also see that
	$\lambda(c) \le -\log f(1) <\infty$. Moreover,
	\begin{equation}\label{lambda:UB}\textstyle
		\lambda(c) \le -\log\P\left\{\sup_{r\in[0,1]}|Y(r)|\le c\right\}\downarrow0,
	\end{equation}
	as $c\uparrow\infty$, since $Y$ is continuous hence locally bounded.
	
	By the log-concavity of Gaussian measures 
	(see Borell \cite[Corollary 2.1]{Borell74}), for every $s \in (0\,,1)$, $a,b>0$, 
	and $n \in \N$,
	\begin{align*}
		&\textstyle \log\P\left\{ \sup_{r\in[0,n]}|Y(r)| \le sa + (1-s)b \right\}\\
		&\hskip0.5in\textstyle \ge s\log\P\left\{ \sup_{r\in[0,n]}|Y(r)| \le a \right\}
		+ (1-s)\log\P\left\{ \sup_{r\in[0,n]}|Y(r)| \le b \right\}.
	\end{align*}
	Divide both sides by $-n$ and then let $n\to\infty$ in order to see that
	$\lambda(sa+(1-s)b) \le s\lambda(a)+(1-s)\lambda(b).$
	In other words, $\lambda$ is convex, 
	whence continuous, on $(0\,,\infty)$. The asserted monotonicity of $\lambda$ is 
	manifest. 
	
	Finally, suppose that $X$ satisfies \eqref{SLND}. Recall the stationary process $Y$
	in \eqref{Y}, and  define $\mathcal{Y}(t)$ to be the $\sigma$-algebra generated
	by $\{Y(s)\}_{s\in(-\infty,t]}$. Thanks to \eqref{SLND},
	for all $n\in\Z_+$,
	\begin{align}\begin{split}\label{Var(Y|Y)}
		\Var[Y(n)\mid \mathcal{Y}(n-1)] 
		&= \e^{-2n\alpha}
		\Var\left[ X(\e^{n})\mid \mathcal{X}(\e^{n-1})\right]\\
		&\ge \mathscr{l}_{_X}\e^{-2n\alpha}(\e^{n}-\e^{n-1})^{2\alpha}
		= \mathscr{l}_{_X}(1-\e^{-1})^{2\alpha},
	\end{split}\end{align}
	where first identity hinges on the already-proved fact that $X(0)=0$.
	Since conditionally Gaussian laws are themselves Gaussian,
	\begin{align*}
		&\P\left\{ |Y(n)|\le c\mid\mathcal{Y}(n-1)\right\}\\
		&= \frac{1}{\sqrt{2\pi \Var[Y(n)\mid \mathcal{Y}(n-1)]}}
			\int_{- c }^ c  \exp\left( 
			-\frac{|x-\E(Y(n)\mid\mathcal{Y}(n-1))|^2}{2
			\Var[Y(n)\mid \mathcal{Y}(n-1)]}\right)\d x.
	\end{align*}
	One-dimensional Gaussian laws are unimodal, as can be checked
	directly, or deduced from the much more general 
	Anderson's shifted-ball inequality \cite{Anderson55}.
	This implies that
	\begin{align*}
		&\P\left\{ |Y(n)|\le c\mid\mathcal{Y}(n-1)\right\}\\
		&\le \frac{1}{\sqrt{2\pi \Var[Y(n)\mid \mathcal{Y}(n-1)]}}
			\int_{- c }^ c  \exp\left( 
			-\frac{|x|^2}{2
			\Var[Y(n)\mid \mathcal{Y}(n-1)]}\right)\d x\\
		&=\P\left\{ |Z| \le \frac{c}{\sqrt{\Var[Y(n)\mid \mathcal{Y}(n-1)]}} \right\}
		\le \P\left\{|Z| \le \frac{c}{\sqrt{\mathscr{l}_{_X}}{(1-\e^{-1})}^{\alpha}}\right\},
	\end{align*}
	where $Z$ denotes a standard normal random variable, and the 
	last inequality is obtained by applying \eqref{Var(Y|Y)}.
	In particular, $\P\{\sup_{s\in[1,\exp(n)]} \left( |X(s)|/s^\alpha\right) \le c\}$
	is equal to
	\begin{align}\begin{split}\label{X:bc:UB}
		\P\left\{ \sup_{r\in[0,n]}|Y(r)|\le c\right\}
		&\le \P\left\{ \max_{1\le i\le n}|Y(i)| \le c\right\}\\
		&\le \P\left\{|Z| \le 
		\frac{c}{\sqrt{\mathscr{l}_{_X}}{(1-\e^{-1})}^{\alpha}}\right\}^n,
	\end{split}\end{align}
	thanks to successive conditioning. 
	Take the logarithm of both sides, divide both
	sides by $-n$, and let $n\to\infty$ in order to see that
	\begin{equation}\label{lambda:LB}
		\lambda(c) \ge -\log\P\left\{|Z| \le 
			\frac{c}{\sqrt{\mathscr{l}_{_X}}{(1-\e^{-1})}^{\alpha}}\right\},
	\end{equation}
	which is $>0$ for all $c>0$
	and tends to infinity when $c\downarrow0$. 
	
	Since $\lambda$ is strictly
	positive and tends to zero, its convexity readily also implies its strict
	decrease under condition \eqref{SLND}. In fact, if $\lambda(a) = \lambda(b)$ for some $b>a>0$, 
	then the convexity and monotonicity of $\lambda$
	on $(a\,,\infty)$ together imply that $\lambda$ is constant on 
	$[b\,,\infty)$ and  since $\lim_{c\to\infty}\lambda(c)=0$, this constant 
	must be 0 and thus $\lambda(b) = 0$. We proved in \eqref{lambda:LB} that 
	\eqref{SLND} implies that $\lambda>0$ 
	everywhere. Therefore, we are led to a contradiction.
	This implies the strict decrease of $\lambda$ under \eqref{SLND},
	and completes the proof of Proposition \ref{pr:X}.
\end{proof}

\subsection{Proof of Corollary \ref{cor:main}}

First we establish the asserted behavior of $\lambda$
near infinity. Let $Y$ be the process that was defined in \eqref{Y}.

Accoding to Fernique's theorem \cite{Fernique}, the continuity of $Y$ ensures
that $C=\E\sup_{r\in[0,1]}|Y(r)|$ is finite. Therefore,
Borell's inequality \cite{Borell}, and the fact that $\Var[Y(r)]=\Var[X(1)]$
for all $r>0$, together yield
$c^{-2} \log \P\{\textstyle\sup_{r\in[0,1]}|Y(r)| > c\} \le 
-(1+\mathcal{o}(1))(2\Var[X(1)])^{-1}$ as $c\to\infty$.
This and \eqref{lambda:UB} together imply the first assertion of Corollary \ref{cor:main}.
In order to establish part 1 of the corollary, we may assume
that $\E(|X(t)-X(s)|^2)\le K|t-s|^{2\delta}$ for all $s,t\ge0$.
Now, for every $1\ge t\ge s\ge 0$, $\E(|Y(t)-Y(s)|^2)$ is equal to
\begin{align*}
	&\E\left(\left| \frac{X(\e^t)}{\e^{\alpha t}} - 
		\frac{X(\e^s)}{\e^{\alpha s}}\right|^2 \right)\\
		&\le 2\e^{-2\alpha t}\E\left(\left| X(\e^t)-X(\e^s)\right|^2\right)
		+2 \left( 1 - \e^{-\alpha (t-s)}\right)^2\E(|Y(s)|^2)\\
	&\le 2K\e^{2(\delta-\alpha) t}\left(1-\e^{-(t-s)}\right)^{2\delta}
		+2 \alpha^2(t-s)^2\E(|Y(0)|^2)
		\le L|t-s|^{2\delta},
\end{align*}
where $L=L(K\,,\alpha\,,\delta)>0$. We have used the fact that,
thanks to the hypothesis of this part and scaling,
$\E(|X(t)-X(s)|^2)\le K|t-s|^{2\delta}$ for all $s,t\in[0\,,\e]$.
Since $X$ -- hence $Y$ -- is assumed to be non degenerate, this forces $\delta$ to be
in $(0\,,1]$. With this observation in mind, we may define
a distance $d$ on $\R$ by setting
$d(s\,,t) = L|t-s|^\delta$ for all $s,t\in\R$.
Let $N$ denote the metric entropy of the metric space
$([0\,,1]\,,d)$. That is,
for every $\varepsilon\in(0\,,1)$, let $N(\varepsilon)$ denote the
minimum number of $d$-balls of radius $\varepsilon$ needed
to cover $[0\,,1]$. The form of the distance function $d$
readily implies that $N(\varepsilon) \lesssim \varepsilon^{-1/\delta}$
uniformly for all $\varepsilon\in(0\,,1)$. Therefore, we may apply the Gaussian correlation inequality \cite{Royen,LatalaMatlak} and Lemma 2.2 of
Talagrand \cite{T95} -- see also Dalang, Lee, Mueller, and Xiao \cite[Lemma 3.4]{DLMX21}
for a more precise statement -- in order to see that there exists $M=M(K\,,\alpha\,,\delta)>0$
such that for all $c \in (0\,,1)$,
\begin{align*}\textstyle
	\P\left\{ \sup_{s\in[0,1]}|Y(s)|\le c \right\}
	& \textstyle \ge \P\left\{ |Y(0)| \le c/2\right\} \cdot 
		\P\left\{ \sup_{t,s\in[0,1]}|Y(t)-Y(s)|\le c/2 \right\}\\
	& \ge \P\left\{ |Y(0)| \le c/2\right\} \cdot M^{-1}\exp(- M/c^{1/\delta}).
\end{align*}
Since $\Var[Y(0)] = \Var[X(1)] > 0$, the preceding together with \eqref{lambda:UB} implies that $\lambda(c) \le M c^{-1/\delta} (1+\mathcal{o}(1))$ as $c \to 0^+$. This proves part 1 of the corollary.

If $Z$ follows a standard normal distribution, then 
$\P\{\textstyle |Z| \le c\, \mathscr{l}_{_X}^{-1/2}\}\ge  1 - 2\e^{-c^2/(2\mathscr{l}_{_X})}$
for all $c>0.$ Thus, we can conclude part 2 of the corollary
since \eqref{lambda:LB} is valid when $X$ satisfies \eqref{SLND}.

It remains to show the lower bound for $\lambda(c)$ as $c\to0^+$. 
For each $c \in (0\,,1)$, similarly to the way \eqref{X:bc:UB} was proved, we may take $m = \lfloor c^{-1/\alpha}n \rfloor$ and deduce that $\P\{\sup_{s\in [1,\exp(n)]} ( |X(s)|/s^\alpha) \le c \}$
is equal to
\begin{align*}\textstyle
	\P\left\{ \sup_{r \in [0,n]} |Y(r)| \le c \right\}
	&\le \P\left\{ \max_{1\le i \le m} |Y(c^{1/\alpha} i)| \le c \right\}\\
	&\le \P\left\{ |Z| \le  c\mathscr{l}_{_X}^{-1/2} (1-\exp(-c^{1/\alpha}))^{-\alpha} \right\}^m.
\end{align*}
Then, take the logarithms, divide by $-n$, and let $n\to\infty$ to see that
$\lambda(c)$ is bounded from below by $-c^{-1/\alpha} \log \P \{ |Z| \le 
 c\mathscr{l}_{_X}^{-1/2} (1-\exp(-c^{1/\alpha}))^{-\alpha}  \}.$
This concludes the proof of Corollary \ref{cor:main} 
since $c(1-\exp(-c^{1/\alpha}))^{-\alpha} \to 1$ as $c \to 0^+$.\qed

\section{Examples}
In this section we present a few families of self-similar Gaussian processes
to which all of the assertions of Theorem \ref{th:main} and Corollary \ref{cor:main}
apply. In the next section we discuss a number of relations between the two examples
of this section.

\subsection{Fractional Brownian motion}
 
Let us choose and fix a number
$H\in(0\,,1)$ and let $B=\{B(t)\}_{t\ge0}$ denote a fractional Brownian motion 
(fBm, for short) with index
$H$; that is, $B$ is a centered Gaussian process with
\begin{equation}\label{Cov}
	\Cov[ B(t) \,, B(s)] = \tfrac12\left[ t^{2H} + s^{2H} - |t-s|^{2H}\right]\qquad\forall s,t\ge0.
\end{equation}
As an immediate consequence of \eqref{Cov} we can see that $B$ is $H$-self similar, 
and $\E(|B(t)-B(s)|^2) = |t-s|^{2H}$ for all $s,t\ge0$.
Moreover, Pitt \cite{Pitt78} has proved that fBm satisfies \eqref{SLND} with $\alpha=H$.
As such, all of the findings
of Theorem \ref{th:main} and Corollary \ref{cor:main} apply to $B$.

One might think of $B(t)$ as the position of a one-dimensional particle
at time $t$ when the mean-squared displacement is
$\E(|B(t)|^2)^{1/2} = |t|^H$ for 
every $t\ge0$. 
When $H=\frac12$ the process $B$ is a classical model for diffusion,
and it can serve as a model for 1-D anomalous diffusion when $H\neq\frac12$
\cite{Mandelbrot}. A large body of applied work
defines fBm  as the solution to the Langevin equation $\d B(t) = \dot\eta(t)\,\d t$ where  $\dot\eta$
is a centered Gaussian noise with 
$\Cov[\dot\eta(t)\,,\dot\eta(s)]=\text{const}\cdot |t-s|^{-H-2}.$
Of course this type of exposition has to be interpreted
with some care since $t\mapsto |t|^{-H-2}$ is not locally integrable
and hence cannot be a covariance functional in the traditional sense. 
But certainly $B$ has continuous sample functions  \cite{Kolmogorov,MandelbrotVanNess} and so
$\dot\eta = \d B(t)/\d t$ is a well-defined  generalized
Gaussian random field \cite{Dudley,Minlos,DFL,Gross}.
Lysy et al \cite{LPHFMVM} 
present a rigorous comparative analysis of fBm and other Gaussian models of 
anomalous diffusion in biological fluids, with additional pointers to the 
physics and biology literature on this topic; see also Verdier et al \cite{VLCVM}
and their extensive bibliography.

\subsection{Linear SPDEs}\label{subsec:SPDE}
Mueller and Wu \cite{MWB} describe a connection between the
increments of a fBm of index $H$ (the ``pinned string'') and a family of 
stochastic partial differential equations (SPDEs, for short). 
Their work builds on earlier linearization ideas \cite{LeiNualart,Walsh,KSXZ,HP,FKM}
in the SPDE literature.
Here, we present an extension, as well as a refinement, of these results.

Throughout this discussion we choose and fix some $d\in\N$ and, 
for every real number $\gamma>0$, let 
$\mathcal{L}_\gamma = -(-\Delta)^{\gamma/2}$
denote the fractional Laplace operator
of index $\gamma/2$, acting on real-valued functions on $\R^d$. Recall 
that $\mathcal{L}_\gamma$ is a pseudo-differential operator
from symbolic calculus, defined via 
$\hat{\mathcal{L}}_\gamma (\xi) = -\|\xi \|^\gamma$ for all $\xi\in\R^d.$
It follows that the fundamental solution to the
corresponding heat operator $\partial_t - \mathcal{L}_\gamma$ 
is of the form $G(s\,,y\,;t\,,x)=G(t-s\,,x-y)$,
where
\begin{equation}\label{hat:G}
	\hat{G}(t\,,\xi) = \exp(-t\| \xi \|^\gamma)\qquad\forall t>0,\ \xi\in\R^d,
\end{equation}
and $\hat{\phantom{a}}$ denotes the Fourier transform in the spatial variable $x$.

Let us collect some elementary facts about the heat kernel $G$. The following
result is a well-known folklore fact, but we were not able to find a complete proof of
these particular statements explicitly, particularly that of part (5), 
and so we will supply the short proof
for the sake of completeness. As is customary, let $\mathcal{C}_0^\infty(\R^d)$
denote the collection of all infinitely differentiable functions on $\R^d$
that vanish at infinity.

\begin{lemma}\label{lem:G}
	\begin{compactenum}[\rm (1)]
	\item $G(t)\in \mathcal{C}_0^\infty(\R^d)\cap L^2(\R^d)$ for all $t>0$;
	\item If $0<\gamma \le 2$ then $G(t)>0$ is a probability density function on $\R^d$ for all $t>0$;
	\item $G(t)\in L^1(\R^d)$ and 
		$\int_{\R^d} G(t\,,x)\,\d x=1$ for all $t>0$;
	\item $G(t+s)=G(t)*G(s)$ for all $t,s>0$;
	\item If $\gamma>2$, then $G(t)$ is signed for all $t>0$.
	\end{compactenum}
\end{lemma}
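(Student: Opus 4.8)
The plan is to prove the five assertions of Lemma~\ref{lem:G} in the order listed, using nothing more than elementary Fourier analysis and, for part~(2), the probabilistic characterization of $\mathcal{L}_\gamma$ as the generator of a stable process. First I would dispatch parts~(1), (3), and~(4) together since they are immediate from \eqref{hat:G}. For part~(1), observe that $\xi\mapsto\exp(-t\|\xi\|^\gamma)$ and all its polynomial multiples lie in $L^1(\R^d)\cap L^2(\R^d)$ for every $t>0$ and $\gamma>0$; hence $G(t)=\mathcal{F}^{-1}[\hat G(t,\cdot)]$ is infinitely differentiable, and the Riemann--Lebesgue lemma shows $G(t)$ and all its derivatives vanish at infinity, so $G(t)\in\mathcal{C}_0^\infty(\R^d)$; Plancherel gives $G(t)\in L^2(\R^d)$. (The statement in the lemma writes $\mathcal{C}_0^\infty(\R)$, which I read as $\mathcal{C}_0^\infty(\R^d)$.) Part~(4) is just the identity $\hat G(t+s,\xi)=\hat G(t,\xi)\hat G(s,\xi)$ together with the convolution theorem. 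Part~(3) for $\gamma>2$ needs a small argument since positivity is unavailable: $\int_{\R^d}G(t,x)\,\d x=\hat G(t,0)=1$ is formal unless $G(t)\in L^1$; I would justify integrability by noting that $\hat G(t,\cdot)$ is Schwartz-class in $\xi$ away from the origin and, more carefully, that $\|\xi\|^\gamma$ being $C^\infty$ except possibly at $0$ still gives enough decay of $G(t,x)$ at infinity — concretely, one can use that $\hat G(t,\cdot)\in L^1$ and has an integrable derivative structure so that $|x|^{d+1}G(t,x)$ is bounded, whence $G(t)\in L^1(\R^d)$ and the integral evaluates to $\hat G(t,0)=1$.

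Next I would handle part~(2). When $0<\gamma\le2$, the operator $\mathcal{L}_\gamma=-(-\Delta)^{\gamma/2}$ is (up to the standard normalization) the infinitesimal generator of a rotationally symmetric $\gamma$-stable Lévy process on $\R^d$ (Brownian motion when $\gamma=2$). Its transition densities are exactly the functions whose Fourier transform is $\exp(-t\|\xi\|^\gamma)$, by the Lévy--Khintchine formula; since transition densities of a genuine Markov process are nonnegative and integrate to $1$, we get $G(t)\ge0$ and $\int G(t)=1$, and strict positivity $G(t)>0$ everywhere follows either from the explicit scaling form $G(t,x)=t^{-d/\gamma}G(1,t^{-1/\gamma}x)$ together with the fact that a nonnegative function with strictly positive, real-analytic Fourier transform cannot vanish on a set of positive measure, or more directly from the known fact that symmetric stable densities are strictly positive. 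I would cite a standard reference for the stable-process transition density rather than reprove it.

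The main obstacle — and the reason the authors flag part~(5) as the one needing a proof — is showing that for $\gamma>2$ the function $G(t)$ must take negative values for every $t>0$. The clean approach is by contradiction: suppose $G(t_0)\ge0$ for some $t_0>0$. By the semigroup property from part~(4) and scaling, $G(t)=G(t_0)^{*(t/t_0)}$ along the dyadic subsemigroup, so nonnegativity would propagate and $G(t)\ge0$ for all $t$ in a suitable sub-semigroup, and by continuity (part~(1)) for all $t>0$; combined with $\int G(t)=1$ from part~(3), each $G(t)$ would be a probability density. But then $\hat G(t,\cdot)=\exp(-t\|\xi\|^\gamma)$ would be the characteristic function of a probability measure, i.e.\ $\xi\mapsto e^{-\|\xi\|^\gamma}$ would be a characteristic function. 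It is a classical fact (Lévy) that $e^{-|\xi|^\gamma}$ is a characteristic function on $\R$ if and only if $0<\gamma\le2$ — equivalently, $\|\xi\|^\gamma$ is a negative-definite function (in Schoenberg's sense) only for $\gamma\in(0,2]$. For $\gamma>2$ this fails, giving the contradiction. I would present this via the one-dimensional reduction: restrict to $\xi=(u,0,\dots,0)$, so $e^{-t|u|^\gamma}$ would have to be a one-dimensional characteristic function, and invoke the fact that a characteristic function $\varphi$ with $\varphi(u)=1-c|u|^\gamma+o(|u|^\gamma)$ near $0$ with $\gamma>2$ is impossible because it would force a vanishing second moment yet $\varphi\not\equiv1$ — or, most elementarily, because twice-differentiability of $\varphi$ at $0$ forces the underlying distribution to have finite variance, hence $\varphi(u)=1-\tfrac{\sigma^2}{2}u^2+o(u^2)$, which is incompatible with the $|u|^\gamma$, $\gamma>2$, leading term being the dominant correction unless $\sigma^2=0$, i.e.\ $\varphi\equiv1$, a contradiction. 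The delicate point to get right is the propagation-of-nonnegativity step, where one must be careful that fractional convolution powers are well defined; using instead the direct observation "$G(t)\ge0$ and $\int G(t)=1$ $\Rightarrow$ $\hat G(t,\cdot)$ is a characteristic function" for the single $t=t_0$ (no semigroup needed, since $\exp(-t_0\|\xi\|^\gamma)$ being a characteristic function already forces $\gamma\le2$ by the rescaling $\xi\mapsto t_0^{1/\gamma}\xi$) is cleaner and avoids that issue entirely.
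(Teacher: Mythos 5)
Your proposal is correct and follows essentially the same route as the paper: Fourier inversion plus Riemann--Lebesgue and Plancherel for (1), the isotropic $\gamma$-stable transition densities via L\'evy--Khintchine for (2), polynomial decay of $G(t,\cdot)$ obtained by differentiating the multiplier $d+1$ times for (3), the convolution theorem for (4), and for (5) the reduction to the classical fact that $\xi\mapsto\e^{-\|\xi\|^\gamma}$ is a characteristic function if and only if $\gamma\le2$. Your single-$t_0$ rescaling shortcut in (5) is a mild streamlining of the paper's scaling-plus-semigroup argument (and your second-moment justification of the classical fact is a nice self-contained touch), but the underlying idea is identical.
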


\begin{proof}
	First of all, $G(t)\in L^2(\R^d)$ for all $t>0$ since
	$\hat{G}(t)\in L^2(\R^d)$ [Plancherel's theorem].
	Now, for every multi-index ${\bf a}\in \N^d$,
	\eqref{hat:G} and the inversion formula for Fourier transforms together yield
	\begin{equation}\label{G}
		\partial^{{\bf a}}_x G(t\,,x)= \frac{(-i)^{|{\bf a}|}}{(2\pi)^d}\int_{\R^d}
		\e^{-ix\cdot \xi - t\| \xi \|^\gamma}\textstyle\prod_{j=1}^d\xi_j^{a_j}\,\d\xi
	\end{equation}
	as a version of the $|{\bf a}|$-th spatial derivative of $G$, where $\partial^0_x G=G$. 
	Since the preceding integral converges absolutely, the Riemann-Lebesgue lemma implies part (1). Part (2)
	is basically saying that $G$ denotes the transition probabilities of an isotopic
	$\gamma$-stable process on $\R^d$, itself a consequence of the L\'evy-Khintchine
	formula \cite{Bertoin,Kyprianou,Sato}, and the strict positivity of
	$G(t)$ is a well-known consequence of subordination \cite{Sato}. 
	This reduces the proof of (3)
	to the case that $\gamma>2$. With that case in mind, 
	consider the differential operator,
	$D = \Delta^d = (\sum_{j=1}^{d}\partial^{2}_{\xi_j})^d.$
	The inverse Fourier multiplier of $D$ is given by $\check{D}(x) = i^{2d}\|x\|^{2d}$.
	Therefore, the inversion formula yields
	$\sup_{x\in\R^d} \|x\|^{2d}  |G(t\,,x)| 
	\lesssim \sum_{j=1}^d \int_{\R^d} | D\hat{G}(t\,,\xi)|\,\d\xi,$
	which can be seen directly to be finite since $\gamma>2$.
	With this under way, and since we have seen that $G(t)$ is bounded for every $t>0$
	[part (1)],  it follows that $|G(t\,,x)| \lesssim (1+\|x\|)^{-2d},$
	uniformly for
	all $x\in\R$, whence follows that $G(t)\in L^1(\R^d)$ for every $t>0$.
	Because $\int_{\R^d}G(t\,,x)\,\d x =\hat{G}(t\,,0)=1$, this proves (3).
	We can deduce (4) from this latter $L^1$-inclusion
	since $\hat{G}(t+s)=\hat{G}(t)\hat{G}(s)$ for all $s,t>0$.

	In order to prove the more interesting part (5), we first note 
	using \eqref{G} the following scaling property:
	$c^{d/\gamma}G(ct\,,c^{1/\gamma}x) = G(t\,,x)$
	for all $c,t>0,\ x\in\R^d.$
	Consequently, $G(t)>0$ [resp.\
	$G(t)\ge0$] for one $t>0$ if and only if $G(t)>0$  [resp.\ $G(t)\ge0$]  for all $t>0$.
	Because $\hat{G}(t\,,0)=1$ for all $t>0$, it follows from part (3)
	that $G(t)>0$ (for one hence all $t>0$)
	if and only if $\{G(t)\}_{t>0}$ is a convolution semigroup of probability densities on $\R^d$. 
	This is equivalent to saying that $G(t)>0$ for some, hence all, $t>0$
	if and only if there exists a L\'evy process
	$\{X_t\}_{t\ge0}$ on $\R^d$ such that 
	$\E\exp\{i\xi \cdot X_t\}=\exp(-t \| \xi \|^\gamma)$
	for all $t>0$ and $\xi\in\R$. Part (5) 
	is now an immediate consequences of the L\'evy-Khintchine formula \cite{Sato,Bertoin,Kyprianou}.
\end{proof}

Now let us choose and fix parameters $0\le\nu\le 1$ and $0<\beta\le d$,
and consider the constant-coefficient SPDE,
\begin{equation}\label{SHE:beta}\left[\begin{split}
	&\partial_t U(t\,,x) = \mathcal{L}_\gamma U(t\,,x)+ \dot{W}(t\,,x)&
		\forall (t\,,x)\in(0\,,\infty)\times\R^d,\\[-2mm]
	&\text{subject to }U(0\,,x) = 0&\forall x\in\R^d,
\end{split}\right.\end{equation}
where $\dot{W}$ is a centered generalized Gaussian noise and has covariance
\[
	\Cov[ \dot{W}(t\,,x)\,,\dot{W}(s\,,y)] = \kappa_{1,\nu}(t-s)
	\kappa_{d,\beta}(x-y)
	\quad\forall s,t\ge0,\ (x\,,y)\in\R^{2d},
\]
where, for every $n\in\mathbb{N}$,
$\kappa_{n,\beta}(z) = \|z\|^{-\beta}$ for all $z\in\R^n\setminus\{0\}$
and $0<\beta<n$, and $\kappa_{n,n}=\delta_0$ on $\R^n$.
Since $\kappa_{d,\beta}\in L^1_{\textit{loc}}(\R^d)$ when $0<\beta<d$,
we can view $\kappa_{d,\beta}$ as a distribution, and it is known that its Fourier transform
satisfies $\hat{\kappa}_{d,\beta} = c\kappa_{d,d-\beta}$ for an explicitly computable
number  $c=c(d\,,\beta)>0$ \cite[(12.10)]{Mattila}. In particular,
$\kappa_{d,\beta}$ is positive definite in the sense of distributions, for the entire range of 
$\beta\in(0\,,d]$,
and therefore the generalized Gaussian noise $\dot{W}$ exists according
to general theory \cite{Dudley,Minlos,DFL,Gross}.

We will frequently appeal to the fact
that, when $\beta\in(0\,,d)$,
\begin{equation}\label{Planch}
	\int_{\R^d\times\R^d} \frac{\psi(x)\phi(y)}{\|x-y\|^\beta}\,\d x\,\d y
	= \frac{c(d\,,\beta)}{(2\pi)^d}\int_{\R^d} \frac{\hat\psi(\xi)
	\overline{\hat\phi(\xi)}}{
	\|\xi\|^{d-\beta}}\,\d\xi\qquad\forall \psi,\phi\in L^1(\R^d).
\end{equation}
This can be viewed as a non-trivial 
extension of the Parseval identity from $\phi,\psi\in\mathcal{S}(\R^d)$ 
to the $L^1$-case,
and follows from Lemma 12.12 of Mattila \cite{Mattila} when $\psi$ and $\phi$ are in addition nonnegative.
Because both sides of \eqref{Planch} define bilinear forms [in $(\phi\,,\psi)$]
the above general case follows from that by considering various combinations
of $\psi^\pm$ and $\phi^\pm$.

According to Dalang 
\cite{Dalang} and Walsh \cite{Walsh},
\eqref{SHE:beta} has a random-field solution,
\begin{equation}\label{mild:beta}
	U(t\,,x) = \int_{(0,t)\times\R^d} G(t-r\,,z-x)\,W(\d r\,\d z)
	\qquad\forall t>0,\ z\in\R^d,
\end{equation}
if and only if the above Wiener integral process is defined pointwise.
This means that \eqref{mild:beta} exists as a classically defined
random field if and only if $\mathscr{R}(t)<\infty$ for every $t>0$, where
\[
	\mathscr{R}(t) = \begin{cases}
		\displaystyle\int_{(0,t)\times(0,t)}\d s\,\d r\int_{\R^d\times\R^d} 
			\frac{\d z\,\d w}{\|z-w\|^\beta} G(s\,,z)\kappa_{1,\nu}(s-r)G(r\,,w)&\text{if $0<\beta<d$},\\
		\displaystyle
		\int_{(0,t)\times(0,t)}\d s\,\d r
			\int_{-\infty}^\infty\d z\ G(s\,,z)\kappa_{1,\nu}(s-r)G(r\,,z)&\text{if $\beta=d=1$}.
	\end{cases}
\]
The integrals are $\ge0$ thanks to properties of positive-definite functions,
and the following shows when exactly $\mathscr{R}$ is finite,
equivalently when exactly \eqref{SHE:beta} has a random-field
solution.

\begin{lemma}\label{lem:R(t)}
	$\mathscr{R}(t)<\infty$ for one, hence all $t>0$, if and only if
	\begin{equation}\label{cond:gamma}
		\gamma >  \frac{\beta}{2-\nu}.
	\end{equation}
\end{lemma}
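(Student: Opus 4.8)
The plan is to compute $\mathscr R(t)$ up to constants by passing to Fourier variables in the spatial integral and then analyzing the resulting time integral. First I would treat the case $0<\beta<d$. Using \eqref{Planch} with $\psi=G(s,\cdot)$ and $\phi=G(r,\cdot)$ — both are in $L^1(\R^d)$ by Lemma \ref{lem:G}(3) — the inner spatial double integral becomes
\[
	\int_{\R^d\times\R^d}\frac{G(s,z)G(r,w)}{\|z-w\|^\beta}\,\d z\,\d w
	= \frac{c(d,\beta)}{(2\pi)^d}\int_{\R^d}\frac{\hat G(s,\xi)\overline{\hat G(r,\xi)}}{\|\xi\|^{d-\beta}}\,\d\xi
	= \frac{c(d,\beta)}{(2\pi)^d}\int_{\R^d}\frac{\e^{-(s+r)\|\xi\|^\gamma}}{\|\xi\|^{d-\beta}}\,\d\xi,
\]
where I used \eqref{hat:G} and the fact that $\hat G(t,\xi)$ is real and positive. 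A radial substitution $\rho=(s+r)^{1/\gamma}\|\xi\|$ shows this equals $\text{const}\cdot(s+r)^{-\beta/\gamma}$, the constant being finite precisely because the radial integral $\int_0^\infty \rho^{\beta-1}\e^{-\rho^\gamma}\,\d\rho$ converges (using $\beta>0$ at $0$ and $\gamma>0$ at $\infty$). Hence, up to a positive multiplicative constant,
\[
	\mathscr R(t) \asymp \int_{(0,t)^2}\frac{(s+r)^{-\beta/\gamma}}{|s-r|^\nu}\,\d s\,\d r.
\]

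The finiteness of this last double integral is then an elementary two-variable exercise. By symmetry it is comparable to $\int_0^t\d s\int_0^s (s+r)^{-\beta/\gamma}(s-r)^{-\nu}\,\d r$; on the region $0<r<s$ one has $s+r\asymp s$, so after the substitution $r=su$ the inner integral is $\asymp s^{1-\nu-\beta/\gamma}\int_0^1 (1-u)^{-\nu}\,\d u$, which is finite iff $\nu<1$ (and when $\nu=1$ the convention $\kappa_1=\delta_0$ applies, handled separately, see below). Integrating $s^{1-\nu-\beta/\gamma}$ over $(0,t)$ converges at the origin iff $1-\nu-\beta/\gamma>-1$, i.e.\ iff $\gamma>\beta/(2-\nu)$, which is exactly \eqref{cond:gamma}; there is no issue at $s=t$ since the exponent questions are all local at the origin. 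Because the integrand is nonnegative, finiteness for one $t>0$ is equivalent to finiteness for all $t>0$ by monotonicity and scaling of the bound, giving the ``one, hence all'' clause.

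It remains to dispose of the degenerate cases $\beta=d=1$ (so $\kappa_\beta=\delta_0$) and $\nu=1$ (so $\kappa_\nu=\delta_0$). When $\beta=d=1$, the spatial integral is $\int_{-\infty}^\infty G(s,z)G(r,z)\,\d z=\int_{-\infty}^\infty \hat G(s,\xi)\hat G(r,\xi)\,\d\xi/(2\pi)=\text{const}\cdot(s+r)^{-1/\gamma}$ by Plancherel directly, which is the $\beta\to d^-$ endpoint of the formula above, and the same time-integral analysis with $\beta=1$, $d=1$ applies verbatim. When $\nu=1$, the time kernel is $\delta_0$ along the diagonal, so $\mathscr R(t)=\int_0^t\d s\int_{\R^d}G(s,z)^2\|\cdot\|^{-\beta}$-type expression reduces (again by \eqref{Planch} or Plancherel) to $\text{const}\cdot\int_0^t s^{-\beta/\gamma}\,\d s$, finite iff $\beta/\gamma<1$, i.e.\ $\gamma>\beta=\beta/(2-\nu)$ with $\nu=1$ — consistent with \eqref{cond:gamma}. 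I expect the main obstacle to be purely bookkeeping: making sure the reduction via \eqref{Planch} is legitimate (it is, since $G(s,\cdot)\in L^1$), that the constants stay finite exactly under $\beta>0$ and $\gamma>0$, and that the degenerate cases are read off as limits rather than requiring a separate argument.
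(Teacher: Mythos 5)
Your proposal is correct and follows essentially the same route as the paper: pass to Fourier variables via \eqref{Planch} (Plancherel in the case $\beta=d=1$), evaluate the spatial integral by scaling to get $(s+r)^{-\beta/\gamma}$, and reduce the question to the elementary time integral $\int_0^t\int_0^s (s+r)^{-\beta/\gamma}(s-r)^{-\nu}\,\d r\,\d s$, with the $\nu=1$ case handled separately via the delta kernel. The only cosmetic difference is that the paper first rescales to $\mathscr R(1)$ and factors the time integral exactly by the substitution $r\mapsto sr$, whereas you use $s+r\asymp s$ on $\{r<s\}$; both give the same two-sided bounds and hence the same ``if and only if.''
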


\begin{proof}
	Throughout the proof, all implied constants depend only on
	$(d\,,\beta\,,\gamma\,,\nu)$. Now apply scaling in order to see
	that it suffices to understand when exactly $\mathscr{R}(1)$
	coverges.  Consider first the case that $0<\beta<d$ and $0\le\nu<1$. In that case,
	\eqref{Planch} yields
	\begin{align*}
		\mathscr{R}(1) &\propto\int_{(0,1)\times(0,1)}\d r\,\d s
			\int_{\R^d}\frac{\d\xi}{\| \xi \|^{d-\beta}}\ 
			\frac{\exp\left\{-(s+r)\|\xi\|^\gamma\right\}}{(s-r)^\nu}\\
		&\propto \int_0^1\d s\int_0^s \d r\ |s-r|^{-\nu} (s+r)^{-\beta/\gamma}\\
		&= \int_0^1s^{1-\nu - (\beta/\gamma)} \,\d s
			\int_0^1 \d r\ |1-r|^{-\nu} (1+r)^{-\beta/\gamma},
	\end{align*}
	which is finite if and only if  $\gamma > \beta/(2-\nu).$ That case that 
	$0\le\nu<1$ and $\beta=d=1$ follows esentially the same argument.
	Next, consider the case that $\nu=1$ and $0<\beta<d$. In that case,
	\[
		\mathscr{R}(1) \propto\int_0^1\d r
		\int_{\R^d}\frac{\d\xi}{\| \xi \|^{d-\beta}}\ 
		\exp\left\{-2r\|\xi\|^\gamma\right\} \propto \int_0^1 r^{-\beta/\gamma}\,\d r
	\]
	is finite if and only if $\gamma>\beta = \beta/(2-\nu)$. And the case that $\nu=1$ and $\beta=d=1$
	is studied in almost exactly the same manner as above. This concludes the proof.
\end{proof}

In conclusion, we see that the SPDE \eqref{SHE:beta} has a random-field solution if and only if
\eqref{cond:gamma} holds, a condition that we assume throughout the remainder
of this subsection. We are
interested in the centered Gaussian process $\{X(t)\}_{t\ge0}$ where
\begin{equation}\label{X(t)}
	X(t) = U(t\,,0)\qquad\forall t>0.
\end{equation}

\begin{lemma}\label{lem:X:scale}
	Thanks to \eqref{cond:gamma}, the Gaussian process $X$ in \eqref{X(t)}
	is $\alpha$-self similar for
	\begin{align}\label{alpha}
		\alpha =  1 - \frac12\left( \nu + \frac{\beta}{\gamma}\right).
	\end{align}
\end{lemma}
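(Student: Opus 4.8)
The plan is to verify $\alpha$-self similarity directly from the mild formulation \eqref{mild:beta}, evaluated at the spatial origin. Since $X$ is centered and Gaussian, it suffices to show that the covariance $\E[X(at)X(as)]$ scales like $a^{2\alpha}\E[X(t)X(s)]$ for every $a>0$ and $s,t>0$, with $\alpha$ as in \eqref{alpha}. By the Wiener-integral isometry associated to the noise $\dot W$, this covariance is
\[
	\E[X(t)X(s)] = \int_{(0,t)\times(0,s)}\d r\,\d r'\int_{\R^d\times\R^d}
	\frac{\d z\,\d w}{\|z-w\|^\beta}\,\frac{G(t-r\,,z)G(s-r'\,,w)}{|r-r'|^\nu}
\]
when $0<\beta<d$ (and the analogous one-dimensional expression when $\beta=d=1$), which is finite by Lemma \ref{lem:R(t)} under \eqref{cond:gamma}.

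The key step is a change of variables exploiting the scaling of the heat kernel established in the proof of Lemma \ref{lem:G}, namely $c^{d/\gamma}G(ct\,,c^{1/\gamma}x)=G(t\,,x)$, equivalently $G(ct\,,y)=c^{-d/\gamma}G(t\,,c^{-1/\gamma}y)$. First I would replace $t,s$ by $at,as$ and substitute $r=a\rho$, $r'=a\rho'$ in the time integrals; this produces a factor $a^2$ from $\d r\,\d r'$, a factor $a^{-\nu}$ from $|r-r'|^{-\nu}=a^{-\nu}|\rho-\rho'|^{-\nu}$, and turns the kernels into $G(a(t-\rho)\,,z)$ and $G(a(s-\rho')\,,w)$. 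Next I would substitute $z=a^{1/\gamma}\zeta$, $w=a^{1/\gamma}\omega$ in the spatial integrals; this yields a factor $a^{2d/\gamma}$ from $\d z\,\d w$, a factor $a^{-\beta/\gamma}$ from $\|z-w\|^{-\beta}=a^{-\beta/\gamma}\|\zeta-\omega\|^{-\beta}$, and by the heat-kernel scaling the kernels become $a^{-d/\gamma}G(t-\rho\,,\zeta)$ and $a^{-d/\gamma}G(s-\rho'\,,\omega)$, contributing $a^{-2d/\gamma}$. Collecting exponents of $a$ gives
\[
	2 - \nu + \frac{2d}{\gamma} - \frac{\beta}{\gamma} - \frac{2d}{\gamma}
	= 2 - \nu - \frac{\beta}{\gamma} = 2\alpha,
\]
exactly matching \eqref{alpha}. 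Hence $\E[X(at)X(as)]=a^{2\alpha}\E[X(t)X(s)]$, and since both $\{X(at)\}_{t\ge0}$ and $\{a^\alpha X(t)\}_{t\ge0}$ are centered Gaussian with identical covariance, their laws coincide.

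For the case $\beta=d=1$ (and similarly $\nu=1$), the same computation applies with $\|z-w\|^{-\beta}$ replaced by $\delta_0(z-w)$; the delta forces $\zeta=\omega$ after rescaling and contributes a factor $a^{-1/\gamma}$ from its homogeneity of degree $-1$ under $z\mapsto a^{1/\gamma}z$, so the exponent count is unchanged. The only genuine subtlety is bookkeeping: one must carry the rescaling through the (conditionally convergent, via Lemma \ref{lem:R(t)}) double integral legitimately, which is justified because all integrands are nonnegative and the change of variables is a diffeomorphism, so Tonelli applies throughout. I expect this substitution/exponent-counting to be routine once the heat-kernel scaling is in hand; the only place to be careful is ensuring the spatial substitution is applied before invoking the kernel scaling identity, so that no spurious factors are introduced. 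This completes the proof of Lemma \ref{lem:X:scale}.
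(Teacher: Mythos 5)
Your computation reaches the right exponent and the overall strategy (reduce to a covariance scaling identity for a centered Gaussian process) is the same as the paper's, but the route through the computation is genuinely different. The paper never performs a physical-space change of variables with the heat-kernel scaling $c^{d/\gamma}G(ct\,,c^{1/\gamma}x)=G(t\,,x)$; instead it applies \eqref{Planch} to pass to Fourier space, integrates out the frequency variable $\xi$ explicitly, and reads the scaling off the resulting one- or two-fold \emph{time} integral, e.g.\ $\Cov[X(t+h),X(t)]=L_1\int_0^{t+h}\d s\int_0^t\d r\,|s+h-r|^{-\nu}(s+h+r)^{-\beta/\gamma}$ for $\nu<1$ and the analogous single integral for $\nu=1$. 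The Fourier route buys a manifestly nonnegative integrand and an explicit closed form that the paper reuses elsewhere (Lemmas \ref{lem:R(t)}, \ref{lem:X:cont}, \ref{lem:X:SLND}); your route is more self-contained in that it needs only the pointwise scaling of $G$ and avoids evaluating any integral.

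One justification in your write-up is wrong as stated and needs repair: you assert that ``all integrands are nonnegative'' so that Tonelli applies, but Lemma \ref{lem:G}(5) shows that $G(t)$ is a \emph{signed} kernel whenever $\gamma>2$, so the physical-space integrand $G(t-r\,,z)G(s-r'\,,w)\|z-w\|^{-\beta}|r-r'|^{-\nu}$ is not nonnegative in general (the paper is careful to say only that the \emph{integrals} are $\ge0$, by positive definiteness). Your substitution is still legitimate, but you must justify it differently: either check absolute convergence of the physical-space integral directly (using $|G(t\,,x)|\lesssim(1+\|x\|)^{-(d+1)}$ and $\|G(t)\|_\infty\propto t^{-d/\gamma}$ from the proof of Lemma \ref{lem:G}, one can bound the absolute spatial integral by a constant times $(u+v)^{-\beta/\gamma}$ and conclude under \eqref{cond:gamma}), or simply perform the scaling on the Fourier-space representation, where nonnegativity of the integrand is immediate. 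With that fix the argument is complete.
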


\begin{proof}
	Suppose $0\le\nu<1$ and either $0<\beta<d$ or
	$\beta=d=1$. Then, in either case,
	the $L^2$-isometry properties of
	Wiener integrals and \eqref{Planch} together
	yield the following for every $t>0$ and $h\ge0$:
	\begin{equation}\label{Cov:SHE:1}\begin{split}
		\Cov[X(t+h)\,,X(t)] 
			&= L_0\int_0^{t+h}\d s\int_0^t\d r\int_{\R^d}\d\xi\
			\frac{\exp\{-(s+h+r)\|\xi\|^\gamma\}}{|s+h-r|^\nu\|\xi\|^{d-\beta}}\\
		&=L_1\int_0^{t+h}\d s\int_0^t\d r\ \frac{1}{|s+h-r|^\nu(s+h+r)^{\beta/\gamma}},
	\end{split}\end{equation}
	where $L_j=L_j(d\,,\beta\,,\gamma)$ [$j=0,1$] are strictly positive but otherwise
	independent of all else. This readily yields
	\begin{equation}\label{Cov:scale}
		\Cov[X(ct+ch)\,,X(ct)] = c^{2\alpha}\Cov[X(t+h)\,,X(t)]\qquad
		\forall c,t>0,\ h\ge0,
	\end{equation}
	where $\alpha$ is defined in Lemma \ref{lem:X:scale}. It remains to study the case
	that $\nu=1$, in which case the noise
	$\dot{W}$ in \eqref{SHE:beta} is white in its time variable.	
	In that case, one computes similarly as was done for \eqref{Cov:SHE:1} and obtains
	the following for every $t>0$ and $h\ge0$:
	\begin{equation*}\begin{split}
		\Cov[X(t+h)\,,X(t)] &= L_0 \int_0^t\d s\int_{\R^d}\d\xi\
			\frac{\exp\{-(2s+h)\|\xi\|^\gamma\}}{\|\xi\|^{d-\beta}}
		= L_2\int_0^t \frac{\d s}{(2s+h)^{\beta/\gamma}},
	\end{split}\end{equation*}
	where $L_2=L_2(d\,,\beta\,,\gamma)$ [$j=0,1$] is strictly positive but otherwise
	independent of all else. 
	This readily yields \eqref{Cov:scale} as well, and completes the proof.
\end{proof}

Next, we verify the continuity and an associated moment estimate for $X$.
Recall that we are assuming \eqref{cond:gamma} in order to ensure
that $X$ is well defined.

\begin{lemma}\label{lem:X:cont}
	The Gaussian process $X$ in \eqref{X(t)}
	is continuous. Moreover, there exists $K=K(\gamma\,,\nu\,,\beta\,,d)>0$
	such that $\E\left(|X(t+h)-X(t)|^2\right) \le Kh^{2\alpha}$ uniformly for all $t>0$ and $h\ge0$, where $\alpha$ is the same constant as in \eqref{alpha}.
\end{lemma}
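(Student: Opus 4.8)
The plan is to first establish the quantitative $L^2$ bound $\E(|X(t+h)-X(t)|^2)\le Kh^{2\alpha}$ uniformly in $t>0$ and $h\ge0$, and then deduce sample-path continuity from it by a routine Kolmogorov--Chentsov argument. Throughout I would work with the mild form \eqref{mild:beta} at $x=0$, i.e.\ $X(t)=\int_{(0,t)\times\R^d}G(t-r,z)\,W(\d r\,\d z)$, and with the $L^2$ isometry for the noise $\dot W$, which pairs two space--time kernels through the factor $\kappa_\nu(|r-s|)\kappa_\beta(\|z-w\|)$; all Wiener integrals below are well defined since \eqref{cond:gamma} is in force. The decisive decomposition is $X(t+h)-X(t)=\int\phi_1\,\d W+\int\phi_2\,\d W$ with the ``overlap'' kernel $\phi_1(r,z)=[G(t+h-r,z)-G(t-r,z)]\1_{(0,t)}(r)$ and the ``fresh'' kernel $\phi_2(r,z)=G(t+h-r,z)\1_{(t,t+h)}(r)$. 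By the triangle inequality in the Hilbert space attached to $\dot W$,
\[
	\E\big(|X(t+h)-X(t)|^2\big)\le 2\,\E\big(|{\textstyle\int}\phi_1\,\d W|^2\big)+2\,\E\big(|{\textstyle\int}\phi_2\,\d W|^2\big),
\]
which removes the need to control the temporal cross-correlation between $(0,t)$ and $(t,t+h)$ when $\nu<1$.

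For the fresh term, substituting $r=t+u$ and using that the law of $\dot W$ is invariant under time shifts shows that $\int\phi_2\,\d W$ is a centered Gaussian variable with the same variance as $U(h,0)=X(h)$; by the $\alpha$-self-similarity established in Lemma \ref{lem:X:scale} this variance equals $h^{2\alpha}\Var[X(1)]$, and $\Var[X(1)]=\mathscr{R}(1)<\infty$ by Lemma \ref{lem:R(t)} under \eqref{cond:gamma}. (Alternatively one may estimate $\E(|\int\phi_2\,\d W|^2)$ directly by the same Fourier computation as below, scaling $h$ out of the time integrals.)

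For the overlap term --- the heart of the matter --- I would apply the isometry and then the spatial Parseval identity \eqref{Planch} at each fixed pair of times $(r,s)$; this is legitimate since $G(\tau)\in L^1(\R^d)$ by Lemma \ref{lem:G}(3), and $\widehat{\phi_1}(r,\xi)=\e^{-(t-r)\|\xi\|^\gamma}(\e^{-h\|\xi\|^\gamma}-1)\1_{(0,t)}(r)$. After the change of variables $\rho=t-r$, $\sigma=t-s$ one obtains
\[
	\E\big(|{\textstyle\int}\phi_1\,\d W|^2\big)\;\propto\;\int_0^t\!\!\int_0^t\frac{\d\rho\,\d\sigma}{|\rho-\sigma|^\nu}\int_{\R^d}\frac{\e^{-(\rho+\sigma)\|\xi\|^\gamma}\,(1-\e^{-h\|\xi\|^\gamma})^2}{\|\xi\|^{d-\beta}}\,\d\xi,
\]
with $\|\xi\|^{d-\beta}$ read as $1$ in the degenerate case $\beta=d=1$ (spatial white noise), and with the case $\nu=1$ (temporal white noise) collapsing the $(\rho,\sigma)$-integral to its diagonal and being only simpler. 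Scaling $\xi\mapsto(\rho+\sigma)^{-1/\gamma}\xi$ turns the inner integral into $(\rho+\sigma)^{-\beta/\gamma}\Psi\!\big(h/(\rho+\sigma)\big)$ with $\Psi(\lambda)=\int_{\R^d}\|\eta\|^{-(d-\beta)}\e^{-\|\eta\|^\gamma}(1-\e^{-\lambda\|\eta\|^\gamma})^2\,\d\eta$. Integrating out the difference variable $\rho-\sigma$ against $|\cdot|^{-\nu}$ (here $\nu<1$), enlarging the $a=\rho+\sigma$ integration to $(0,\infty)$, using $1-\nu-\beta/\gamma=2\alpha-1$, and finally substituting $a=h/\lambda$ yields
\[
	\E\big(|{\textstyle\int}\phi_1\,\d W|^2\big)\;\lesssim\;h^{2\alpha}\int_0^\infty\lambda^{-2\alpha-1}\Psi(\lambda)\,\d\lambda .
\]
Since $(1-\e^{-u})^2\le\min(1,u^2)$, Tonelli's theorem reduces the finiteness of the last integral to that of $\int_0^\infty u^{-2\alpha-1}(1-\e^{-u})^2\,\d u$ and of $\int_{\R^d}\|\eta\|^{2\alpha\gamma-(d-\beta)}\e^{-\|\eta\|^\gamma}\,\d\eta$, which one checks hold precisely when $0<\alpha<1$; as $\beta/\gamma>0$ makes $\alpha<1$ automatic and $\alpha>0$ is exactly \eqref{cond:gamma}, this gives $\E(|X(t+h)-X(t)|^2)\le Kh^{2\alpha}$ with $K=K(\gamma,\nu,\beta,d)$.

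Finally, continuity follows with little effort: Gaussianity gives $\E(|X(t+h)-X(t)|^{2p})=c_p\big(\E|X(t+h)-X(t)|^2\big)^p\le c_pK^ph^{2\alpha p}$ for every integer $p\ge1$, so taking $p$ with $2\alpha p>1$ and applying the Kolmogorov--Chentsov criterion yields a continuous modification of $X$ on every $[\varepsilon,T]\subset(0,\infty)$; the bound $\E|X(h)|^2\le Kh^{2\alpha}\to0$ together with the same criterion on $[0,1]$ pushes continuity down to the origin and forces $X(0)=0$ (one may instead invoke Dudley's entropy bound, since the canonical metric of $X$ is dominated by $\sqrt K\,|t-s|^\alpha$). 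I expect the only genuine work to be the overlap estimate: arranging the Fourier computation so that the exponent emerges as $2\alpha$ rather than $2\alpha\gamma$ (the substitution $a=h/\lambda$, not $a=h^\gamma/\lambda$, is the delicate point), and verifying that convergence of $\int_0^\infty\lambda^{-2\alpha-1}\Psi(\lambda)\,\d\lambda$ reproduces exactly the standing hypothesis \eqref{cond:gamma}; the boundary cases $\beta=d=1$ and $\nu=1$ require only cosmetic modifications.
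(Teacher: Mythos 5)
Your argument is correct, but it takes a genuinely different route from the paper's for the main case $0<\nu\le 1$. There the paper applies \eqref{Planch} \emph{twice} --- in the spatial and in the temporal variable --- to get a single joint spectral representation
\[
	\E\left(|X(t+h)-X(t)|^2\right)\propto\int_{\R}\d\tau\int_{\R^d}\d\xi\,
	\left|\left(\e^{-i(t+h)\tau}-\e^{-(t+h)\|\xi\|^\gamma}\right)-\left(\e^{-it\tau}-\e^{-t\|\xi\|^\gamma}\right)\right|^2
	\frac{\|\xi\|^{\beta-d}|\tau|^{\nu-1}}{\|\xi\|^{2\gamma}+|\tau|^2},
\]
bounds the squared modulus by $16\wedge h^2(\|\xi\|^{2\gamma}+|\tau|^2)$ via the mean value theorem, and scales the entire $(\tau,\xi)$-integral in one step to pull out $h^{2-\nu-(\beta/\gamma)}=h^{2\alpha}$; the case $\nu=0$ is then treated separately with only the spatial Parseval identity and a three-term expansion $Q_1+2Q_2+Q_3$ that is close in spirit to your overlap/fresh/cross decomposition, except that the cross term $Q_2$ is estimated directly rather than discarded. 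Your proof instead stays in the time domain for every $\nu$: the parallelogram inequality in the noise's Hilbert space (legitimate, since $\kappa_\nu\otimes\kappa_\beta$ is positive definite) removes the cross-correlation, the fresh term is identified exactly with $\Var[X(h)]=h^{2\alpha}\Var[X(1)]$ by time-stationarity of the noise together with Lemma \ref{lem:X:scale}, and the overlap term is handled by a single spatial Parseval plus the substitutions $\xi\mapsto(\rho+\sigma)^{-1/\gamma}\xi$ and $a=h/\lambda$, whose bookkeeping I have checked: the exponent $1-\nu-\beta/\gamma=2\alpha-1$ appears in the right place, and your finiteness criterion $0<\alpha<1$ correctly reduces to the standing hypothesis \eqref{cond:gamma}. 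What your route buys is a treatment uniform in $\nu\in[0,1]$ (only the collapse to the diagonal at $\nu=1$ is special), no temporal spectral density, and a fairly explicit constant, namely $2\Var[X(1)]$ plus a multiple of $\int_0^\infty\lambda^{-2\alpha-1}\Psi(\lambda)\,\d\lambda$; the paper's double-Fourier route is shorter once the joint spectral density is written down, since one scaling dispatches the whole increment at once. Your concluding Kolmogorov--Chentsov step supplies the continuity deduction that the paper leaves implicit.
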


\begin{proof}
First, consider the case that $0<\nu \le 1$.
In that case, we may apply \eqref{Planch} twice to write
\begin{align*}
	&\E\big(|X(t+h)-X(t)|^2\big)\\
	&\propto \int_{\R} \d \tau \int_{\R^d} \d \xi \left| (\e^{-i(t+h)\tau}-\e^{-(t+h)\|\xi\|^\gamma}) - (\e^{-it\tau}-\e^{-t\|\xi\|^\gamma}) \right|^2 \frac{\|\xi\|^{\beta-d}|\tau|^{\nu-1}}{\|\xi\|^{2\gamma}+|\tau|^2},
\end{align*}
where the constant of proportionality depends only on $(d\,,\beta)$. 
We have also used the fact that the Fourier transform of 
$\R\ni r\mapsto f(r) = \exp\{-(t-r) \|\xi\|^\gamma\}\1_{[0,t]}(r)$ is 
$\hat{f}(\tau)=(\e^{-it\tau}-\e^{-t\| \xi \|^{\gamma}})/(\|\xi \|^\gamma - i\tau)$.
In particular, we may apply the mean value theorem to see that
\begin{align*}
	\left| (\e^{-i(t+h)\tau}-\e^{-(t+h)\|\xi\|^\gamma}) - (\e^{-it\tau}-\e^{-t\|\xi\|^\gamma}) \right| \le h(\|\xi\|^{\gamma}+|\tau|)
\end{align*}
uniformly for all $t>0$ and $h \ge 0$.
It is clear that the left-hand side above is always bounded by $4$.
It follows that
\begin{align*}
	\E\big(|X(t+h)-X(t)|^2\big)
	\lesssim \int_{\R} \d \tau \int_{\R^d} \d \xi \left[16 \wedge \left(h^2(\|\xi\|^{2\gamma}+|\tau|^2)\right)\right] \frac{\|\xi\|^{\beta-d}|\tau|^{\nu-1}}{\|\xi\|^{2\gamma}+|\tau|^2}.
\end{align*}
By scaling, the last integral is equal to $Kh^{2-\nu-(\beta/\gamma)}$, where
\[
K = \int_{\R} \d \tau \int_{\R^d} \d \xi \left[16 \wedge (\|\xi\|^{2\gamma}+|\tau|^2)\right] \frac{\|\xi\|^{\beta-d}|\tau|^{\nu-1}}{\|\xi\|^{2\gamma}+|\tau|^2}
\]
is a positive finite real number since $\beta+(\nu-2)\gamma<0$ under \eqref{cond:gamma}.

Next, consider the case that $\nu = 0$.
In that case, we may use \eqref{Planch} to write
\begin{align*}
	&\E\big(|X(t+h)-X(t)|^2\big)\\
	&\propto \int_{\R^d}  \d \xi \, \|\xi\|^{\beta-d} \left[ 
		\int_0^{t+h} \d s \, \e^{-(t+h-s)\|\xi\|^\gamma} - 
		\int_0^t \d s\, \e^{-(t-s)\|\xi\|^\gamma} \right]^2
		= Q_1+2Q_2+Q_3,
\end{align*}
where
\begin{align*}
	Q_1 &= \int_{\R^d} \d \xi \, \|\xi\|^{\beta-d} \left[ \int_0^t \d s  
		\left( \e^{-(t+h-s)\|\xi\|^\gamma} - \e^{-(t-s)\|\xi\|^\gamma} 
		\right) \right]^2,\\
	Q_2 &= \int_{\R^d} \d \xi \, \|\xi\|^{\beta-d} \int_0^t \d s 
		\left( \e^{-(t+h-s)\|\xi\|^\gamma} - \e^{-(t-s)\|\xi\|^\gamma} \right) 
		\int_t^{t+h} \d r \, \e^{-(t+h-r)\|\xi\|^\gamma},\\
	Q_3 &= \int_{\R^d} \d \xi \, \|\xi\|^{\beta-d} \left[ \int_t^{t+h} \d s \, 
		\e^{-(t+h-s)\|\xi\|^\gamma} \right]^2.
\end{align*}
In particular, we may evaluate the integral in $s$ and then use scaling to deduce that
\begin{align*}
	Q_1 &= \int_{\R^d}\d \xi\,\|\xi\|^{\beta-d-2\gamma} 
		\left( 1 - \e^{-h\|\xi\|^\gamma} \right)^2 \left( 1 - \e^{-t\|\xi\|^\gamma} \right)^2\\
	& \le \int_{\R^d}\d \xi\, \|\xi\|^{\beta-d-2\gamma} 
		\left( 1 - \e^{-h\|\xi\|^\gamma} \right)^2 \propto h^{2-(\beta/\gamma)} = h^{2\alpha}.
\end{align*}
Similarly,
\begin{align*}
	|Q_2| &= \int_{\R^d}\d \xi\, \|\xi\|^{\beta-d-2\gamma} 
		\left( 1 - \e^{-h\|\xi\|^\gamma} \right)^2 \left( 1 - \e^{-t\|\xi\|^\gamma} \right)
		\lesssim h^{2\alpha},\\
	Q_3 &= \int_{\R^d}\d \xi\,\|\xi\|^{\beta-d-2\gamma} 
		\left( 1 - \e^{-h\|\xi\|^\gamma} \right)^2 \propto h^{2\alpha}.
\end{align*}
The implied constants are all independent of $t$ and $h$.
Combine the estimates for $Q_1$, $Q_2$ and $Q_3$ to finish the proof.
\end{proof}

We complete this section by showing that the entirety of the statements of
Theorem \ref{th:main} and Corollary \ref{cor:main} apply
to the Gaussian process $t\mapsto X(t)=U(t\,,0)$,
where $U$ solves the SPDE \eqref{SHE:beta}. 
The following does that when $0<\nu\le 1$.

\begin{lemma}\label{lem:X:SLND}
	Assume \eqref{cond:gamma} with $0 < \nu \le 1$ and either $0<\beta<d$ or $\beta=d=1$.
	Then the Gaussian process $X$ in \eqref{X(t)}
	satisfies \eqref{SLND} with $\alpha$ given by \eqref{alpha}.
\end{lemma}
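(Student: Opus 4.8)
The plan is to establish the lower bound
$\Var[X(t)\mid X(r);\,0<r<s]\ge \mathscr{l}_{_X}(t-s)^{2\alpha}$
by working on the Fourier (spectral) side, exactly as in the covariance computations of Lemmas~\ref{lem:X:scale} and \ref{lem:X:cont}. Recall from \eqref{mild:beta}--\eqref{X(t)} that $X(t)=\int_{(0,t)\times\R^d}G(t-r,-z)\,W(\d r\,\d z)$, so $X$ lives in the Gaussian space generated by the noise $\dot W$ on the strip $(0,t)\times\R^d$. The conditional variance $\Var[X(t)\mid X(r);\,0<r<s]$ is the squared $L^2$-distance from $X(t)$ to the closed linear span $\mathcal H_s$ of $\{X(r):0<r<s\}$ inside that Gaussian space. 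The first step is to bound this below by replacing $\mathcal H_s$ with the larger subspace $\mathcal H_s^\flat$ spanned by \emph{all} of the noise restricted to $(0,s)\times\R^d$; then
\[
	\Var[X(t)\mid X(r);\,0<r<s]\ge \operatorname{dist}_{L^2}\!\big(X(t),\mathcal H_s^\flat\big)^2
	= \mathbb E\big( |X(t)-\widetilde X(s,t)|^2\big),
\]
where $\widetilde X(s,t)=\int_{(0,s)\times\R^d}G(t-r,-z)\,W(\d r\,\d z)$ is the orthogonal projection onto $\mathcal H_s^\flat$, i.e. the part of the stochastic integral carried by times in $(0,s)$. Hence $X(t)-\widetilde X(s,t)=\int_{(s,t)\times\R^d}G(t-r,-z)\,W(\d r\,\d z)$, an honest Wiener integral over the slab $(s,t)\times\R^d$.

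The second step is to compute, or bound below, the $L^2$-norm of this increment. Using the $L^2$-isometry of the Wiener integral and the Plancherel-type identity \eqref{Planch}, one gets for $0<\nu<1$
\[
	\mathbb E\big(|X(t)-\widetilde X(s,t)|^2\big)
	\propto \int_{\R^d}\frac{\d\xi}{\|\xi\|^{d-\beta}}\int_s^t\!\!\int_s^t
	\frac{\e^{-(u+v)\|\xi\|^\gamma}}{|u-v|^\nu}\,\d u\,\d v,
\]
and for $\nu=1$ the inner double integral collapses to $\int_s^t \e^{-2u\|\xi\|^\gamma}\,\d u$. The goal is a clean lower bound of order $(t-s)^{2\alpha}$ with a constant uniform in $0<s<t$. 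Substituting $u=s+(t-s)u'$, $v=s+(t-s)v'$ and then rescaling $\xi$ by $(t-s)^{-1/\gamma}$ turns the $\xi$-integral, up to the factor $(t-s)^{2-\nu-\beta/\gamma}=(t-s)^{2\alpha}$, into
\[
	\int_{\R^d}\frac{\d\eta}{\|\eta\|^{d-\beta}}\int_0^1\!\!\int_0^1
	\frac{\exp\!\big(-\big(\tfrac{2s}{t-s}+u'+v'\big)\|\eta\|^\gamma\big)}{|u'-v'|^\nu}\,\d u'\,\d v',
\]
and dropping the nonnegative quantity $\tfrac{2s}{t-s}\|\eta\|^\gamma$ in the exponent gives a lower bound that is a \emph{strictly positive constant} $\mathscr{l}_{_X}=\mathscr{l}_{_X}(d,\beta,\gamma,\nu)$, finite by the same convergence analysis as in Lemma~\ref{lem:R(t)} (the $\xi$-integral near $0$ behaves like $\|\eta\|^{\beta-d}$, integrable, and near $\infty$ the Gaussian factor $\e^{-(u'+v')\|\eta\|^\gamma}$ wins since $u'+v'$ is bounded below away from $0$ on a positive-measure subset of the unit square). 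This yields \eqref{SLND} with the stated $\alpha$.

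The main obstacle — or rather the one point that needs genuine care rather than routine bookkeeping — is ensuring that the \emph{constant} in the lower bound is uniform over all $0<s<t$, not merely over $t-s$ in a compact set. This is precisely what the rescaling achieves: after the change of variables the only residual $s$-dependence sits in the term $\tfrac{2s}{t-s}$ inside a \emph{decaying} exponential, so discarding it only decreases the integral and the resulting bound is genuinely $s$-free. A secondary technical point is the $\nu=1$ and $\beta=d=1$ (white-noise-in-time, or one-dimensional Riesz-kernel-degenerating-to-$\delta_0$) cases, where \eqref{Planch} must be replaced by the direct spatial identity $\int_\R G(u,z)G(v,z)\,\d z = G(u+v,0)\propto (u+v)^{-1/\gamma}$ used in Lemma~\ref{lem:X:scale}; the same rescaling argument then applies verbatim to $\int_s^t\!\int_s^t |u-v|^{-\nu}(u+v)^{-\beta/\gamma}\,\d u\,\d v$, again producing a positive $s$-independent constant times $(t-s)^{2-\nu-\beta/\gamma}$. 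With these cases dispatched, the proof is complete.
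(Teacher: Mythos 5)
Your argument has a genuine gap in the case $0<\nu<1$, which is most of the range the lemma covers. The key step is your claim that $\widetilde X(s,t)=\int_{(0,s)\times\R^d}G(t-r,-z)\,W(\d r\,\d z)$ is the orthogonal projection of $X(t)$ onto the subspace $\mathcal H_s^\flat$ generated by the noise on $(0,s)\times\R^d$, so that $\operatorname{dist}_{L^2}(X(t),\mathcal H_s^\flat)^2=\E(|X(t)-\widetilde X(s,t)|^2)$. That identity requires the slab integral $X(t)-\widetilde X(s,t)=\int_{(s,t)\times\R^d}G(t-r,-z)\,W(\d r\,\d z)$ to be orthogonal to every $W(\phi)$ with $\phi$ supported in $(0,s)\times\R^d$. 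When $\nu=1$ the noise is white in time and this orthogonality holds, so your argument is correct there (and is essentially the $\Var(I_2)$ computation in Lemma \ref{lem:X:fBm}). But when $0<\nu<1$ the temporal covariance is $|u-v|^{-\nu}>0$, so noise increments on $(0,s)$ and $(s,t)$ are positively correlated and the orthogonality fails. All you then have is $\Var[X(t)\mid X(r);\,0<r<s]\ge\operatorname{dist}(X(t),\mathcal H_s^\flat)^2$ together with $\operatorname{dist}(X(t),\mathcal H_s^\flat)^2\le\E(|X(t)-\widetilde X(s,t)|^2)$: the two inequalities point in opposite directions and yield nothing. Computing the variance of the slab integral is therefore not a lower bound for the conditional variance; the true projection onto the past of a temporally fractional noise is a nontrivial prediction-theoretic object, and the innovation variance could a priori be much smaller than $(t-s)^{2\alpha}$.

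The paper avoids this by working entirely on the space--time spectral side (following Lee and Xiao): it represents $\E|X(t)-\sum_j a_jX(s_j)|^2$ as an integral against the joint spectral density $\|\xi\|^{\beta-d}|\tau|^{\nu-1}/(\|\xi\|^{2\gamma}+|\tau|^2)$ and pairs the integrand with the Fourier transform of a bump function $\phi_\varepsilon\otimes\psi_\varepsilon$ localized at scale $\varepsilon=t-s_n$; the compact support of $\phi$ kills every $s_j$-term, and Cauchy--Schwarz plus scaling gives the $(t-s_n)^{2\alpha}$ lower bound uniformly in $n$, the $s_j$, and the $a_j$. Separately, there are two computational slips in your $\nu<1$ display that are worth noting even though they are not the main problem: the heat kernel in the slab integral is evaluated at $t-r$, so after the substitution the exponent is $(t-s)(u'+v')\|\xi\|^\gamma$ with no residual $2s/(t-s)$ term at all (the variance of the slab integral is exactly proportional to $(t-s)^{2\alpha}$); and in any case discarding a nonnegative term from a negative exponent \emph{increases} the exponential, so the bound you describe would go in the wrong direction.
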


\begin{proof}
	We follow carefully the proof of Lee and Xiao \cite[Lemma 7.3]{LX23}, applicable
	when $\gamma\in(0\,,2]$, $0 < \nu \le 1$ and $0<\beta < d$, in order extend it to the present setting.
	
	It suffices to show the existence of a number $\mathscr{l}_{_X}>0$ such that
	\begin{align*}\textstyle
		\E\left(\left| X(t) - \sum_{j=1}^n a_j X(s_j)\right|^2 \right)
		\ge \mathscr{l}_{_X} (t-s_n)^{(\gamma-\beta)/\gamma}
	\end{align*}
	uniformly for all integers $n\ge 1$, $0\le s_1 < \cdots < s_n < t$ and $a_1,\dots, a_n \in \R$.
	To this end, we first use \eqref{Planch} as in the proof of Lemma \ref{lem:X:cont} to see that
	\begin{align}\label{SLND:SHE}
		&\textstyle\E\left(\left| X(t) - \sum_{j=1}^n a_j X(s_j)\right|^2 \right)\\\notag
		&\propto\int_{\R} \d\tau\int_{\R^d}\d\xi\,
			\textstyle
			\left| (\e^{-it\tau} - \e^{-t \|\xi\|^\gamma}) - \sum_{j=1}^n a_j 
			(\e^{-is_j\tau}-\e^{-s_j \|\xi \|^\gamma}) \right|^2 
			\displaystyle\frac{\|\xi\|^{\beta-d}|\tau|^{\nu-1}}{\|\xi\|^{2\gamma} + |\tau|^2},
	\end{align}
	where the implied constant depends only on $(d\,,\beta)$. 
%Also,
%	in the last equality, we have used the simple fact that the Fourier transform of the function 
%	$\R\ni r\mapsto f(r) = \exp\{-(t-r) \|\xi\|^\gamma\}\1_{[0,t]}(r)$ is 
%	$\hat{f}(\tau)=(\e^{-it\tau}-\e^{-t\| \xi \|^{\gamma}})/(\|\xi \|^\gamma - i\tau)$.
	
	Next, we choose and fix two smooth functions $\phi: \R \to \R$ 
	and $\psi:\R^d\to\R$ that are respectively supported in $[-1\,,1]$ 
	and $[-1\,,1]^d$ and satisfy $\phi(0) = \psi(0) = 1$.
	For every $\varepsilon>0$, $x\in\R$, and $y\in\R^d$, define 
	\begin{align*}
		\phi_\varepsilon(x) = \varepsilon^{-1}\phi(\varepsilon^{-1}x),
		\qquad
		\psi_\varepsilon(y) = \varepsilon^{-d/\gamma}\psi(\varepsilon^{-1/\gamma}y).
	\end{align*}
	Define
	\begin{equation}\label{epsilon}
		\varepsilon =t-s_n,
	\end{equation}
	and
	\begin{align*}
		&\mathcal{I} := \int_{\R} \d\tau \int_{\R^d}\d \xi\, 
			\textstyle\left[ (\e^{-it\tau} - \e^{-t\| \xi \|^\gamma}) 
			- \sum_{j=1}^n a_j (\e^{-is_j\tau}-\e^{-s_j \|\xi \|^\alpha}) \right] 
			\e^{it\tau} \hat{\phi}_\varepsilon(\tau) \hat{\psi}_\varepsilon(\xi)\\
		&= \int_{\R} \d\tau \int_{\R^d}\d \xi\, 
			\textstyle\left[ (\e^{-it\tau} - \e^{-t\| \xi \|^\gamma}) 
			- \sum_{j=1}^n a_j (\e^{-is_j\tau}-\e^{-s_j \|\xi \|^\alpha}) \right] 
			\e^{it\tau} \hat{\phi}(\varepsilon \tau) \hat{\psi}(\varepsilon^{1/\gamma} \xi),
	\end{align*}
	where $\hat{\phi}_\varepsilon$ and $\hat{\psi}_\varepsilon$
	denote the Fourier transforms of $\phi_\varepsilon$ and $\psi_\varepsilon$,
	as is customary. We apply inverse Fourier transforms to compute the
	$\d\tau$-integral
	and simplify the expression for $\mathcal{I}$ as follows:
	\[
		\mathcal{I} \propto  \int_{\R^d}
		\textstyle\left[ 
		(\phi_{\varepsilon}(0)-\phi_{\varepsilon}(t) \e^{-t \| \xi\|^\gamma}) 
		- \sum_{j=1}^n a_j (\phi_\varepsilon(t-s_j) - \phi_\varepsilon(t) 
		\e^{-s_j\| \xi \|^\gamma}) \right] \hat{\psi}(\varepsilon^{1/\gamma} \xi)\,\d\xi,
	\]
	where the constant of proportionality is $(2\pi)^d$.
	Because $\phi$ is supported in $[-1\,,1]$, it follows that
	$\phi_\varepsilon(t) = \phi_\varepsilon(t-s_j)=0$ 
	for every $1 \le j \le n$ since $t/\varepsilon \ge 1$, $(t-s_j)/\varepsilon \ge 
	(t-s_n)/\varepsilon = 1$. Therefore, the inversion formula yields
	$\mathcal{I} \propto \phi_\varepsilon(0) \int_{\R^d} \hat{\psi}_\varepsilon (\xi) \,\d \xi
	\propto \phi_\varepsilon(0) \psi_\varepsilon (0) = \varepsilon^{-1-(d/\gamma)},$
	where the constants of proportionality depend only on $d$. 
	Therefore, the definition of $\mathcal{I}$, \eqref{SLND:SHE},
	and the Cauchy-Schwarz inequality together imply that
	\begin{align}
		&\varepsilon^{-2-(2d/\gamma)}\propto \mathcal{I}^2 \label{I^2}\\\notag
		&{\textstyle
			\lesssim \E\left( \left| X(t) - \sum_{j=1}^n a_j X(s_j)\right|^2 \right)}
			\int_{\R}\d\tau
			\int_{\R^d}\d\xi\
			|\hat{\phi} (\varepsilon\tau)|^2 |\hat{\psi}(\varepsilon^{1/\gamma}\xi)|^2 
			\left(\frac{\|\xi\|^{2\gamma} + |\tau|^2}{|\tau|^{\nu-1}\|\xi\|^{\beta-d}}\right).
	\end{align}
	Because $\phi$ and $\psi$ are test functions of rapid decrease, we
	may apply scaling to see that the preceding integral is equal to some positive real number
	$c=c(d\,,\gamma\,,\beta\,,\phi\,,\psi)$ times $\varepsilon^{-4-(2d/\gamma)+(\beta/\gamma)+\nu}$.
	Therefore, \eqref{I^2} yields
	\[\textstyle
		\E\left(\left| X(t) - \sum_{j=1}^n a_j X(s_j)\right|^2 \right)
		\gtrsim\varepsilon^{2-\nu-(\beta/\gamma)} = (t-s_n)^{2\alpha},
	\]
	thanks to \eqref{epsilon}, where the implied constant is a number that depends only on 
	$(d\,,\gamma\,,\beta\,,\phi\,,\psi)$ and in particular not on 
	$t,s_1,\ldots,s_n$. This completes the proof.
\end{proof}

\begin{OP}
	In the case $\nu=0$, one can show in like manner that
	\begin{align*}
		&\textstyle\E\left(\left| X(t) - \sum_{j=1}^n a_j X(s_j)\right|^2 \right)\\
%		&\textstyle\propto \int_{\R^d} \d \xi \,\|\xi\|^{\beta-d} 
%			\left[ \int_0^\infty \d r \left( \e^{-(t-r)\|\xi\|^\gamma} 
%			\1_{[0,t]}(r) - \sum_{j=1}^n a_j \e^{-(s_j-r)\|\xi\|^\gamma} 
%			\1_{[0,s_j]}(r) \right) \right]^2\\
		&\textstyle=\int_{\R^d} \d \xi \,\|\xi\|^{\beta-d-2\gamma} 
			\left[ (1-\e^{-t\|\xi\|^\gamma}) - \sum_{j=1}^n a_j(1-\e^{-s_j\|\xi\|^\gamma}) \right]^2.
	\end{align*}
	Let $\phi:\R^d\to\R$ be a test function and define
	\[\textstyle
		\mathcal{I} = \int_{\R^d} \d \xi \,\left[ (1-\e^{-t\|\xi\|^\gamma}) 
		- \sum_{j=1}^n a_j(1-\e^{-s_j\|\xi\|^\gamma}) \right] \hat{\phi}(\xi).
	\]
	Then
	\[\textstyle
		\mathcal{I} = \left[ (2\pi)^d\phi(0) - \int_{\R^d} \e^{-t\|\xi\|^\gamma} 
		\hat{\phi}(\xi) \d \xi \right] - \sum_{j=1}^n a_j \left[
		(2\pi)^d\phi(0)-\int_{\R^d} \e^{-s_j\|\xi\|^\gamma}\hat{\phi}(\xi)\, \d \xi\right].
	\]
	It seems that there is no natural choice of $\phi$ that makes all the terms in 
	$\sum_{j=1}^n (\cdots)$ vanish.
	Therefore, it is unclear to us whether or not $X$ satisfies \eqref{SLND} when $\nu=0$,
	and the validity (or not) of \eqref{SLND} when $\nu=0$ is  an open problem.
\end{OP}

\subsection{Comparison of critical exponents}\label{sec:comparison}

Recall the process $X$ defined by \eqref{X(t)} which is $\alpha$-self similar 
and satisfies \eqref{SLND} with the index $\alpha$ given by \eqref{alpha} 
(see Lemmas \ref{lem:X:scale} and \ref{lem:X:SLND}).
Thanks to Theorem \ref{th:main}, the critical boundary crossing exponent function 
$\lambda_X(c) = \lambda(c\,,X)$ is well defined for $X$.
The goal of this section is to establish the following inequality which 
relates $\lambda_X$ to the exponent 
$\lambda_B(c) = \lambda(c\,,B)$ of a fractional Brownian 
motion $B$ with the same index $\alpha$.

\begin{theorem}\label{th:crit:exp}
	If $\nu=1$ and either $0<\beta<d$ or $\beta=d=1$, then
	\[
		\lambda_X(c) \le \lambda_B(c/K_0)
		\qquad\forall c>0,
	\]
	where $B$ denotes an $\fBm$ with index $\alpha = \frac12 (1-\beta\gamma^{-1})$,
	and 
	\begin{align}\label{K_0}
		K_0^2 = \frac{c(d\,,\beta)}{2(2\pi)^d} 
		\int_{\R^d} \left[ \left( 1-\e^{-\|\xi\|^\gamma} \right)^2  
		+\left( 1-\e^{-2\|\xi\|^\gamma} \right)  \right] 
		\frac{\d \xi}{\|\xi\|^{d+\gamma-\beta}}.
	\end{align}
\end{theorem}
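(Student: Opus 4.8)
The plan is to realize $K_0B$ as $X$ plus an independent Gaussian perturbation and then to invoke Anderson's shifted-ball inequality. First I would compute $\Cov[X(t)\,,X(s)]$ in closed form. Since $\nu=1$ the driving noise $\dot W$ is white in time, so the $L^2$-isometry for Wiener integrals and the Plancherel-type identity \eqref{Planch} give, for a constant $\vartheta=\vartheta(d\,,\beta)>0$ and all $s,t\ge0$,
\[
	\Cov[X(t)\,,X(s)] = \vartheta\int_{\R^d}\frac{\e^{-|t-s|\|\xi\|^\gamma} - \e^{-(t+s)\|\xi\|^\gamma}}{\|\xi\|^{d+\gamma-\beta}}\,\d\xi,
\]
which follows after carrying out the inner integral $\int_0^{t\wedge s}\e^{-(t+s-2r)\|\xi\|^\gamma}\,\d r$ explicitly (the case $\beta=d=1$, with spatial white noise, is handled identically). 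Comparing with \eqref{K_0} and using $(1-\e^{-x})^2+(1-\e^{-2x})=2(1-\e^{-x})$ identifies the normalization: $K_0^2=2\vartheta c_2$ with $c_2:=\int_{\R^d}(1-\e^{-\|\xi\|^\gamma})\|\xi\|^{\beta-d-\gamma}\,\d\xi\in(0\,,\infty)$, where finiteness of $c_2$ uses exactly $\gamma>\beta$, i.e.\ \eqref{cond:gamma} with $\nu=1$. The change of variables $\xi\mapsto u^{-1/\gamma}\xi$ then yields, for every $u>0$, the scaling identity $\tfrac12 K_0^2 u^{2\alpha}=\vartheta\int_{\R^d}(1-\e^{-u\|\xi\|^\gamma})\|\xi\|^{\beta-d-\gamma}\,\d\xi$, since $2\alpha=1-\beta/\gamma$ when $\nu=1$.

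Second I would extract the covariance identity that drives the proof. Writing $\Cov[K_0B(t)\,,K_0B(s)]=\tfrac12 K_0^2(t^{2\alpha}+s^{2\alpha}-|t-s|^{2\alpha})$, replacing each of the three powers via the scaling identity above, and subtracting the formula for $\Cov[X(t)\,,X(s)]$, the elementary factorization $1-\e^{-tx}-\e^{-sx}+\e^{-(t+s)x}=(1-\e^{-tx})(1-\e^{-sx})$ collapses the difference to
\[
	\Cov[K_0B(t)\,,K_0B(s)]-\Cov[X(t)\,,X(s)]=\vartheta\int_{\R^d}\frac{(1-\e^{-t\|\xi\|^\gamma})(1-\e^{-s\|\xi\|^\gamma})}{\|\xi\|^{d+\gamma-\beta}}\,\d\xi.
\]
The right-hand side is symmetric and positive semidefinite in $(s\,,t)$: it is the $L^2(\R^d)$ inner product of $\varphi_t$ with $\varphi_s$, where $\varphi_u(\xi)=\sqrt\vartheta\,(1-\e^{-u\|\xi\|^\gamma})\|\xi\|^{(\beta-d-\gamma)/2}$, and $\varphi_u\in L^2(\R^d)$ for each $u$ precisely because $\gamma>\beta$. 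Hence it is the covariance of some centered Gaussian process $N$; taking $X$ and $N$ independent, the Gaussian process $X+N$ has the same finite-dimensional distributions, hence the same law, as $K_0B$.

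Third I would conclude via Anderson's inequality. Fix $c>0$. By Proposition \ref{pr:X}, $\lambda_X(c)$ and $\lambda_B(c/K_0)$ are the $\log t$-decay exponents of $\P\{\sup_{s\in[1,t]}|X(s)|/s^\alpha\le c\}$ and $\P\{\sup_{s\in[1,t]}|B(s)|/s^\alpha\le c/K_0\}=\P\{\sup_{s\in[1,t]}|K_0B(s)|/s^\alpha\le c\}$, respectively. For any finite $F\subset[1\,,t]$ the vectors $(K_0B(s))_{s\in F}$ and $(X(s))_{s\in F}+(N(s))_{s\in F}$ have the same law, so Anderson's inequality \cite{Anderson55}, applied with the symmetric convex set $\{g\in\R^F:\,|g(s)|\le cs^\alpha\ \text{for all }s\in F\}$ and the independent centered Gaussian shift $(N(s))_{s\in F}$, gives
\[
	\P\left\{\max_{s\in F}\frac{|K_0B(s)|}{s^\alpha}\le c\right\}\le\P\left\{\max_{s\in F}\frac{|X(s)|}{s^\alpha}\le c\right\}.
\]
Letting $F$ increase to a countable dense subset of $[1\,,t]$ and using the sample-path continuity of $X$ (Lemma \ref{lem:X:cont}) and of $B$ promotes this to $\P\{\sup_{s\in[1,t]}|K_0B(s)|/s^\alpha\le c\}\le\P\{\sup_{s\in[1,t]}|X(s)|/s^\alpha\le c\}$. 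Taking $-(\log t)^{-1}\log(\cdot)$ of both sides and letting $t\to\infty$, Proposition \ref{pr:X} then yields $\lambda_X(c)\le\lambda_B(c/K_0)$, which is the claim.

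The only genuinely substantive step is the covariance identity of the second paragraph, and — once the first-paragraph bookkeeping is in place — it reduces to the one-line factorization $1-\e^{-tx}-\e^{-sx}+\e^{-(t+s)x}=(1-\e^{-tx})(1-\e^{-sx})$; this is precisely why the hypothesis $\nu=1$ (which forces $\alpha=\tfrac12(1-\beta/\gamma)$, matching the self-similarity index of $B$) enters. A minor point to handle with care is that the identification of $K_0B$ with $X+N$ in law is needed only finite-dimensionally, so no regularity of $N$ is required; continuity of $X$ and of $B$ alone lets one pass from finite grids to the supremum over $[1\,,t]$.
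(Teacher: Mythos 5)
Your proof is correct and follows essentially the same route as the paper: the process $N$ you construct from the positive semidefinite kernel $\vartheta\int(1-\e^{-t\|\xi\|^\gamma})(1-\e^{-s\|\xi\|^\gamma})\|\xi\|^{\beta-d-\gamma}\,\d\xi$ is exactly the paper's process $S$ in \eqref{S}, and the conclusion is drawn from Anderson's shifted-ball inequality in both cases. The only (harmless) difference is that the paper realizes $S$ explicitly as a Wiener integral and proves its sample-path continuity (Lemma \ref{lem:S:cont}) so that $X+S$ is a continuous $\fBm$, whereas you work with finite-dimensional distributions and pass to the supremum using continuity of $X$ and $B$ alone.
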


\begin{OP}
	Is $\lambda_X(c) = \lambda_B(c/K_0)$?
\end{OP}
We are not able to produce any bounds in the reverse direction, and have even
failed to numerically evaluate these boundary crossing exponents reliably in order
to know whether we can hope to upgrade the proceeding to a conjecture.

In order to prove Theorem \ref{th:crit:exp}, 
let us introduce a white noise $W_1$ on $\R^d$ independent of the noise $W$ in \eqref{SHE:beta} and define the centered Gaussian process $S=\{S(t)\}_{t \ge 0}$ by
\begin{align}\label{S}
	S(t) = \sqrt{\frac{c(d\,,\beta)}{2(2\pi)^{d}}} \int_{\R^d} \frac{1-\e^{-t\|\xi\|^\gamma}}{\|\xi\|^{(d+\gamma-\beta)/2}} W_1(\d\xi) \qquad \forall t \ge 0,
\end{align}
where $c(d\,,\beta)$ is the number in \eqref{Planch}.
The processes $X$ and $S$ are independent.

\begin{lemma}\label{lem:S:cont}
	The process $S$ is a.s.~continuous on $\R_+$ and $\cC^\infty$ on $(0\,,\infty)$.
\end{lemma}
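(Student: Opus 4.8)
The plan is to read the Gaussian process $S$ in \eqref{S} as a Wiener integral against white noise with a time-dependent integrand $g_t(\xi) = \sqrt{c(d,\beta)/(2(2\pi)^d)}\,(1-\e^{-t\|\xi\|^\gamma})\|\xi\|^{-(d+\gamma-\beta)/2}$, and to verify the two assertions by controlling the $L^2(\R^d)$-norms of the $t$-derivatives of this integrand together with a standard Kolmogorov-type continuity criterion. First I would record the covariance
\[
	\E[S(t)S(s)] = \frac{c(d,\beta)}{2(2\pi)^d}\int_{\R^d}
	\frac{(1-\e^{-t\|\xi\|^\gamma})(1-\e^{-s\|\xi\|^\gamma})}{\|\xi\|^{d+\gamma-\beta}}\,\d\xi,
\]
and check that this integral converges for all $s,t\ge0$: near $\xi=0$ the numerator is $O(\|\xi\|^{2\gamma})$ so the integrand is $O(\|\xi\|^{\gamma-\beta-d+2\gamma})$, integrable in $d$ dimensions once $\gamma>\beta/(2-\nu)=\beta$ (which holds here since $\nu=1$), while near $\xi=\infty$ the numerator is bounded and $d+\gamma-\beta>d$, so integrability is automatic. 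This is essentially the computation behind Lemma \ref{lem:X:scale}/\ref{lem:X:cont} specialized to $\nu=1$, and it identifies $S$ (up to the constant $K_0$) with fBm of index $\alpha=\frac12(1-\beta/\gamma)$.

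Next, for continuity on $\R_+$ I would estimate $\E(|S(t)-S(s)|^2)$ for $0\le s\le t$ by
\[
	\E(|S(t)-S(s)|^2) \propto \int_{\R^d}\frac{(\e^{-s\|\xi\|^\gamma}-\e^{-t\|\xi\|^\gamma})^2}{\|\xi\|^{d+\gamma-\beta}}\,\d\xi
	\le \int_{\R^d}\frac{(1-\e^{-(t-s)\|\xi\|^\gamma})^2}{\|\xi\|^{d+\gamma-\beta}}\,\d\xi,
\]
and then a change of variables $\xi\mapsto (t-s)^{-1/\gamma}\xi$ shows this equals $K_0^2\,(t-s)^{(\gamma-\beta)/\gamma} = K_0^2 |t-s|^{2\alpha}$. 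Since a centered Gaussian process with $\E(|S(t)-S(s)|^2)\lesssim|t-s|^{2\alpha}$ has, by Gaussian hypercontractivity, $\E(|S(t)-S(s)|^{2k})\lesssim|t-s|^{2k\alpha}$ for all $k$, the Kolmogorov–Chentsov criterion gives an a.s.\ continuous modification on every compact subinterval of $\R_+$, hence on $\R_+$.

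For the $\mathcal{C}^\infty$ assertion on $(0,\infty)$ I would differentiate under the Wiener integral: formally $S^{(k)}(t) = \sqrt{c(d,\beta)/(2(2\pi)^d)}\int_{\R^d} \|\xi\|^{k\gamma}\e^{-t\|\xi\|^\gamma}\|\xi\|^{-(d+\gamma-\beta)/2}W_1(\d\xi)$, and to justify this it suffices to check that for each fixed $t_0>0$ the $L^2(\R^d)$-norm of the $k$-th difference quotient's integrand converges to that of $\|\xi\|^{k\gamma}\e^{-t\|\xi\|^\gamma}\|\xi\|^{-(d+\gamma-\beta)/2}$, uniformly for $t$ in a neighborhood of $t_0$; the factor $\e^{-t\|\xi\|^\gamma}$ kills the high-frequency divergence for any number of derivatives once $t$ is bounded below, and near $\xi=0$ the weight $\|\xi\|^{-(d+\gamma-\beta)/2}$ is square-integrable since $\gamma-\beta>0$. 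Having these $L^2$ bounds, the field $\{S^{(k)}(t)\}$ is jointly Gaussian with covariance smooth in $(s,t)\in(0,\infty)^2$, a local Kolmogorov estimate applies to each $S^{(k)}$, and one concludes that $S$ is a.s.\ $\mathcal{C}^\infty$ on $(0,\infty)$. I expect the only mild obstacle to be the bookkeeping in the dominated-convergence argument that moves each derivative inside the stochastic integral—making sure the near-origin and near-infinity tails are handled uniformly for $t$ in a compact subset of $(0,\infty)$—but this is routine given that $\nu=1$ forces $\gamma>\beta$.
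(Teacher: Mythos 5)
Your overall strategy is the same as the paper's: represent $S$ as a Wiener integral with integrand $\kappa(t,\xi)\propto(1-\e^{-t\|\xi\|^\gamma})\|\xi\|^{-(d+\gamma-\beta)/2}$, bound $\E(|S(t)-S(s)|^2)$ and invoke Kolmogorov for continuity, then differentiate $\kappa$ in $t$ for smoothness. The continuity half is complete and in fact slightly cleaner than the paper's (you bound $\e^{-2s\|\xi\|^\gamma}\le1$ and scale once, where the paper treats the regimes $t\ge h$ and $t<h$ separately). One aside is wrong, however: $S$ is \emph{not} an fBm up to a constant --- its increment variance carries the extra damping factor $\e^{-2s\|\xi\|^\gamma}$, and it is the sum $(X+S)/K_0$ that is fBm (Lemma~\ref{lem:X:fBm}); moreover the constant in your increment bound is not $K_0^2$ but only part of it. If $S$ really were an fBm it could not be $\cC^\infty$ on $(0,\infty)$, so this remark contradicts the very lemma you are proving; fortunately it is not used in your estimates.

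The genuine gap is in the $\cC^\infty$ half. What your plan establishes is that $S$ is mean-square differentiable to all orders on $(0,\infty)$ and that each candidate derivative $S^{(k)}$ admits a continuous modification. That does not by itself yield almost sure smoothness of the sample paths: you must still identify the continuous modification of $S^{(k)}$ with an actual pathwise derivative of $S$. The paper closes this by a stochastic Fubini argument showing that $\int_0^\infty S^{(n)}(t)f(t)\,\d t=(-1)^n\int_0^\infty S(t)\,\frac{\d^nf(t)}{\d t^n}\,\d t$ a.s.\ for every $f\in\cC^\infty_c(0,\infty)$, i.e.\ $S^{(n)}$ is the $n$th weak derivative of $S$; since $S$ and all the $S^{(n)}$ are continuous, pathwise $\cC^\infty$ regularity follows. (Equivalently, one can verify $\E\bigl|S(t)-S(\varepsilon)-\int_\varepsilon^tS^{(1)}(r)\,\d r\bigr|^2=0$ and iterate.) Your phrase ``a local Kolmogorov estimate applies to each $S^{(k)}$, and one concludes'' skips exactly this identification step, which is the one nontrivial point of the smoothness argument.
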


\begin{proof}
	For every $t,h>0$,
	\begin{align*}
		\E\big( |S(t+h)-S(t)|^2\big) 
			&\propto \int_{\R^d} \e^{-2t\|\xi\|^\gamma}
			\left( 1 - \e^{-h\|\xi\|^\gamma} \right)^2 \|\xi\|^{\beta-\gamma-d}\, \d \xi\\
		&\lesssim h^2 \int_{\R^d}  \e^{-2t\|\xi\|^\gamma} \|\xi\|^{\beta+\gamma-d}\, \d \xi
			\propto \frac{h^2}{t^{1+(\beta/\gamma)}},
	\end{align*}
	where the implied constants depend only on $(d\,,\beta\,,\gamma)$.
	In particular, 
	\begin{equation}\label{S:tmpsup}
		\sup_{t\ge h}\E\big( |S(t+h)-S(t)|^2\big) \lesssim h^{1-(\beta/\gamma)}\quad
		\forall h>0,
	\end{equation}
	where the implied constant depends only on $(d\,,\beta\,,\gamma)$.
	Also, by \eqref{S} and scaling, $\E( |S(t)|^2)\propto t^{1-(\beta/\gamma)}$
	uniformly for every $t>0$. Therefore,
	\begin{align*}
		\E\big( |S(t+h)-S(t)|^2\big) &\le 2\E\big(|S(t+h)|^2\big)+2\E\big(|S(t)|^2\big)\\
		&\lesssim (t+h)^{1-(\beta/\gamma)}+ t^{1-(\beta/\gamma)}
			\qquad\forall t,h\ge0.
	\end{align*}
	This and \eqref{S:tmpsup} together yield
	\[
		\E\big( |S(t+h)-S(t)|^2\big) \lesssim h^{1-(\beta/\gamma)}
		\quad\forall t,h>0,
	\]
	where the implied constant depends only on $(d\,,\beta\,,\gamma)$. Thanks to the Kolmogorov
	continuity theorem and the fact that moments of Gaussian random variables
	are determined by their variance, it follows that that $S\in\cC^a_{\textit{loc}}(\R_+)$
	a.s.\ for every $a\in(0\,, \frac12(1-\beta\gamma^{-1}))$. In particular, $S$
	is continuous on $\R_+$. Next define
	\[
		S^{(n)}(t)
		= \sqrt{\frac{c(d\,,\beta)}{2(\pi)^d}}(-1)^{n+1} t^n \int_{\R^d} 
		\|\xi\|^{\frac12(\beta-d)+(n-\frac12)\gamma}\e^{-t\|\xi\|^\gamma}
		\,W_1(\d\xi) \quad\forall t > 0,\ n\in\N,
	\]
	where $W_1$ is the same 
	white noise that was used in \eqref{S} to define $S$. The process $S^{(n)}$ is well defined,
	for example because
	\begin{align*}
		\E\left(|S^{(n)}(t)|^2\right) \propto \int_{\R^d} 
		\|\xi\|^{\beta-d+(2n-1)\gamma} \e^{-2t\|\xi\|^\gamma}\,\d\xi
		<\infty\quad\forall t > 0,\ n\in\N.
	\end{align*}
	Define
	\[
		k(t\,,\xi) = 
		\sqrt{\frac{c(d\,,\beta)}{2(2\pi)^d}} \left(\frac{ 1 - \e^{-t\|\xi\|^\gamma}}{%
		\|\xi\|^{(d+\gamma-\beta)/2}}\right)\qquad\forall t>0,\
		\xi\in\R,
	\]
	and observe that 
	\[
		S(t)=\int_{\R^d} k(t\,,\xi)\,W_1(\d\xi),\quad
		S^{(n)}(t) = \int_{\R^d} \partial^n_t k(t\,,\xi)\, W_1(\d\xi).
	\]
	Thanks to the preceding $L^2$-computations, we may apply a stochastic Fubini
	argument twice in order to see that, for all non random $f\in\cC^\infty_c(0\,,\infty)$,
	\begin{align*}
		&\int_0^\infty S^{(n)}(t)f(t)\,\d t = \int_{\R^d} W_1(\d\xi)
			\int_0^\infty\d t\ f(t)\partial^n_t k(t\,,\xi)\\
		&= (-1)^n\int_{\R^d} W_1(\d\xi)
			\int_0^\infty\d t\ \frac{\d^n f(t)}{\d t^n} k(t\,,\xi)
			= (-1)^n\int_0^\infty S(t)\frac{\d^n f(t)}{\d t^n}\,\d t
			\quad\text{a.s.}
	\end{align*}
	It follows that $S^{(n)}$ is the $n$th weak derivative of $S$, and hence it remains to 
	prove that $S^{(n)}$ is continuous [up to a modification]. With that end in mind note that
	\[
		\E\left(|S^{(n)}(t) - S^{(n)}(s)|^2\right)\lesssim Q_1+Q_2,
	\]
	where the implied constant is universal, and
	\begin{align*}
		Q_1&= (t^n-s^n)^2 \int_{\R^d} \|\xi\|^{\beta-d+(2n-1)\gamma} \e^{-2t\|\xi\|^\gamma}\d\xi,\\
		Q_2&= s^{2n}\int_{\R^d} \|\xi\|^{\beta-d+(2n-1)\gamma}
			\left( \e^{-s\|\xi\|^\gamma} - \e^{-t\|\xi\|^\gamma}\right)^2\,\d\xi.
	\end{align*}
	Choose and fix two arbitrary numbers $\varepsilon$ and $M$ that satisfy
	$0<\varepsilon<1<M$. Since
	$t^n-s^n=n^{-1}\int_s^t x^{n-1}\,\d x$,
	\[
		Q_1 \le n^{-2}M^{2n-2}(t-s)^2\int_{\R^d} 
		\|\xi\|^{\beta-d+(2n-1)\gamma} \e^{-2\varepsilon\|\xi\|^\gamma}\d\xi
		\lesssim (t-s)^2,
	\]
	uniformly for all $s,t\in[\varepsilon\,,M]$. Also,
	\begin{align*}
		Q_2&\le M^{2n}\int_{\R^d} \|\xi\|^{\beta-d+(2n-1)\gamma}
			\left( 1-\e^{-|t-s| \|\xi\|^\gamma}\right)^2 
			\e^{-2\varepsilon\|\xi\|^\gamma}\d\xi\\
		&\le M^{2n}(t-s)^2\int_{\R^d} \|\xi\|^{\beta-d+(2n+1)\gamma}
			 \e^{-2\varepsilon\|\xi\|^\gamma}\d\xi \propto (t-s)^2,
	\end{align*}	
	uniformly for all $s,t\in[\varepsilon\,,M]$.
	The implied constants above depend only on 
	$(d\,,\beta\,,\gamma\,,n\,,\varepsilon\,,M)$.
	Thus, we can deduce from Kolmogorov's continuity theorem 
	that $S^{(n)}$ is continuous on $(0\,,\infty)$. 
	 This concludes the proof of Lemma \ref{lem:S:cont}.
\end{proof}

\begin{lemma}\label{lem:X:fBm}
	If $\nu = 1$ and either $0<\beta<d$ or $\beta=d=1$, then
	$B(t):= \frac{X(t)+S(t)}{K_0}$ [$t\ge 0$]
	is an $\fBm$  with index $\frac12(1-\beta\gamma^{-1})$, 
	where $K_0$ was defined in \eqref{K_0}.
\end{lemma}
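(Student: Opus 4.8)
The plan is to verify that $B(t) = (X(t)+S(t))/K_0$ is a centered Gaussian process whose covariance matches that of an $\fBm$ with index $\alpha = \tfrac12(1-\beta\gamma^{-1})$, i.e.\ that
\[
	\Cov[B(t)\,,B(s)] = \tfrac12\big( t^{2\alpha} + s^{2\alpha} - |t-s|^{2\alpha}\big)
	\qquad\forall s,t\ge0.
\]
Since $X$ and $S$ are independent centered Gaussian processes with continuous sample functions, so is $B$, and the only thing to check is the covariance identity. Equivalently, because $\fBm$ is characterized by $\E(|B(t)-B(s)|^2) = |t-s|^{2\alpha}$ together with $B(0)=0$ and stationarity of increments is \emph{not} automatic here, the cleanest route is to compute $\E[(X(t)+S(t))(X(s)+S(s))]$ directly in Fourier variables and recognize the $\fBm$ covariance; but in fact it is easier to compute the second-moment (variogram) structure and verify it has the form $\tfrac12(a t^{2\alpha}+a s^{2\alpha}-a|t-s|^{2\alpha})$ with $a=K_0^2$, since a centered Gaussian process with that variogram and vanishing at $0$ is necessarily an $\fBm$ up to the scalar $K_0$.

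The key computation proceeds as follows. First I would record the spatial-Fourier representation of $\E[X(t)X(s)]$ in the case $\nu=1$: proceeding exactly as in the proof of Lemma~\ref{lem:X:cont} (the $0<\nu\le1$ case) with the Plancherel-type identity \eqref{Planch}, one finds, for a constant proportional to $c(d\,,\beta)(2\pi)^{-d}$,
\[
	\E[X(t)X(s)] = \frac{c(d\,,\beta)}{(2\pi)^d}\int_{\R^d}
	\frac{\big(1-\e^{-t\|\xi\|^\gamma}\big)\big(1-\e^{-s\|\xi\|^\gamma}\big) - \tfrac12\big(1-\e^{-2(t\wedge s)\|\xi\|^\gamma}\big)\big(\cdots\big)}{\|\xi\|^{d+\gamma-\beta}}\,\d\xi,
\]
or more transparently, compute $\E(|X(t)-X(s)|^2)$ from the $Q_1,Q_2,Q_3$ decomposition already set up in Lemma~\ref{lem:X:cont} but keeping the $t$-dependent factors $(1-\e^{-t\|\xi\|^\gamma})$ rather than bounding them by $1$. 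In parallel, from the definition \eqref{S} of $S$ and the $L^2$-isometry of white-noise integrals,
\[
	\E[S(t)S(s)] = \frac{c(d\,,\beta)}{2(2\pi)^d}\int_{\R^d}
	\frac{\big(1-\e^{-t\|\xi\|^\gamma}\big)\big(1-\e^{-s\|\xi\|^\gamma}\big)}{\|\xi\|^{d+\gamma-\beta}}\,\d\xi.
\]
Adding these two, the plan is that the "extra" terms in $\E[X(t)X(s)]$ coming from the $s\le r\le t$ portion of the time integral combine with $\E[S(t)S(s)]$ so that the total collapses to a pure difference kernel: one should get
\[
	\E[(X+S)(t)\,(X+S)(s)] = \frac{c(d\,,\beta)}{2(2\pi)^d}\int_{\R^d}
	\frac{\big(1-\e^{-t\|\xi\|^\gamma}\big)^2+\big(1-\e^{-s\|\xi\|^\gamma}\big)^2-\big(1-\e^{-|t-s|\|\xi\|^\gamma}\big)^2}{\|\xi\|^{d+\gamma-\beta}}\,\d\xi,
\]
after using $\e^{-t}-\e^{-s}$ manipulations and a change of variables in the inner integral for the $|t-s|$ term. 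Then, evaluating $\int_{\R^d}\|\xi\|^{\beta-\gamma-d}(1-\e^{-u\|\xi\|^\gamma})^2\,\d\xi = u^{1-\beta/\gamma}\cdot[\text{constant}] = u^{2\alpha}\cdot[\text{constant}]$ by the substitution $\xi\mapsto u^{-1/\gamma}\xi$, with the constant at $u=1$ being exactly $\int_{\R^d}\|\xi\|^{\beta-\gamma-d}(1-\e^{-\|\xi\|^\gamma})^2\,\d\xi$, one reads off that the variogram of $X+S$ equals $K_0^2 \cdot |t-s|^{2\alpha}$ with $K_0^2$ as in \eqref{K_0}—note \eqref{K_0} is written with the two summands $(1-\e^{-\|\xi\|^\gamma})^2 + (1-\e^{-2\|\xi\|^\gamma})$, which is the bookkeeping form before the difference-kernel simplification, so part of the task is to reconcile that expression with the post-simplification constant $\int_{\R^d}\|\xi\|^{\beta-\gamma-d}(1-\e^{-\|\xi\|^\gamma})^2\,\d\xi$ (the identity $(1-\e^{-a})^2+(1-\e^{-2a}) = 2(1-\e^{-a})$ on the relevant region does this after an internal scaling). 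Dividing by $K_0^2$ gives the $\fBm$ variogram, and since $B(0)=X(0)+S(0)=0$, $B$ is an $\fBm$ of index $\alpha$.

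The main obstacle I expect is the bookkeeping in the Fourier/time integrals for $\E[X(t)X(s)]$: one must be careful that the cross terms between the $[0,s]$ and $[s,t]$ time-slabs, together with the $\E[S(t)S(s)]$ contribution, conspire to produce precisely a difference kernel and not merely "something $\fBm$-like." Concretely, writing $1-\e^{-t\|\xi\|^\gamma} = (1-\e^{-s\|\xi\|^\gamma}) + \e^{-s\|\xi\|^\gamma}(1-\e^{-(t-s)\|\xi\|^\gamma})$ and expanding the product $(1-\e^{-t\|\xi\|^\gamma})(1-\e^{-s\|\xi\|^\gamma})$ generates three pieces, one of which is $(1-\e^{-s\|\xi\|^\gamma})^2$ and the others involve $\e^{-s\|\xi\|^\gamma}$; matching these against $\tfrac12[(1-\e^{-t})^2+(1-\e^{-s})^2-(1-\e^{-|t-s|})^2]$ (which itself expands, using $1-\e^{-t} = 1-\e^{-s}\e^{-(t-s)}$, into a combination of the same building blocks) is the heart of the verification. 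This is an elementary but slightly delicate algebraic identity in $\e^{-s\|\xi\|^\gamma}$ and $\e^{-(t-s)\|\xi\|^\gamma}$; once it is pinned down pointwise in $\xi$, integrating against $\|\xi\|^{\beta-\gamma-d}\,\d\xi$ and applying the scaling substitution finishes the proof. All integrals converge absolutely because $\beta+(\nu-2)\gamma<0$ under \eqref{cond:gamma} with $\nu=1$ (equivalently $\gamma>\beta$, i.e.\ $\alpha>0$), exactly as used in Lemmas~\ref{lem:X:cont} and \ref{lem:S:cont}.
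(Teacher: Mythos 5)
Your overall strategy---compute the covariance of $X+S$ in Fourier variables via \eqref{Planch}, observe that the cross terms cancel so that the covariance collapses to a difference kernel, and read off the $\fBm$ covariance with normalizing constant $K_0^2$---is essentially the paper's argument. (The paper organizes the computation a little differently: using that the noise is white in time when $\nu=1$, it splits $X(t+h)-X(t)$ into the independent contributions $I_1,I_2$ of the noise on $(0,t)$ and on $[t,t+h)$, recognizes $\Var(I_1)$ as $\frac{c(d,\beta)}{2(2\pi)^d}\int(1-\e^{-h\|\xi\|^\gamma})^2\|\xi\|^{\beta-\gamma-d}\,\d\xi$ \emph{minus} $\E(|S(t+h)-S(t)|^2)$, adds $\Var(I_2)$, and concludes $\E(|B(t+h)-B(t)|^2)=h^{2\alpha}$ by scaling; the two routes are equivalent.)

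However, the concrete collapsed formula you write down is wrong, and the ``reconciliation'' of constants you propose cannot work as stated. For $0\le s\le t$ and $a=\|\xi\|^\gamma$ one has
\[
	\Cov[X(t),X(s)]=\frac{c(d\,,\beta)}{2(2\pi)^d}\int_{\R^d}\bigl(\e^{-(t-s)a}-\e^{-(t+s)a}\bigr)\,\|\xi\|^{\beta-\gamma-d}\,\d\xi,
\]
and adding $\Cov[S(t),S(s)]=\frac{c(d,\beta)}{2(2\pi)^d}\int(1-\e^{-ta})(1-\e^{-sa})\|\xi\|^{\beta-\gamma-d}\,\d\xi$ cancels the $\e^{-(t+s)a}$ terms and leaves the kernel $(1-\e^{-ta})+(1-\e^{-sa})-(1-\e^{-(t-s)a})$ to the \emph{first} power --- not the squared kernel $(1-\e^{-ta})^2+(1-\e^{-sa})^2-(1-\e^{-(t-s)a})^2$ in your display. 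Accordingly, the constant produced by the scaling substitution is $\frac{c(d,\beta)}{(2\pi)^d}\int(1-\e^{-\|\xi\|^\gamma})\|\xi\|^{\beta-\gamma-d}\,\d\xi$, which equals $K_0^2$ exactly by the identity $(1-\e^{-a})^2+(1-\e^{-2a})=2(1-\e^{-a})$ that you cite. By contrast, your proposed constant, proportional to $\int(1-\e^{-\|\xi\|^\gamma})^2\|\xi\|^{\beta-\gamma-d}\,\d\xi$, is strictly smaller (after the same identity the integrands differ by $2\e^{-a}(1-\e^{-a})>0$), and since both integrals are homogeneous of the same degree under $\xi\mapsto u^{-1/\gamma}\xi$, no ``internal scaling'' can identify them; your formula already fails at $t=s$, where it underestimates $\Var[X(t)+S(t)]=K_0^2t^{2\alpha}$. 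With the kernel corrected to first powers, the rest of your plan (scaling to extract $u^{2\alpha}$, $B(0)=0$, Gaussianity, continuity) goes through and yields the lemma.
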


\begin{proof}
	For every $t, h \ge 0$ we may write $X(t+h) - X(t) = I_1+I_2$ where
	\begin{align*}
		I_1&= \int_{(0,t)\times\R^d} \left[G(t+h-r\,,z) - G(t-r\,,z)\right] W(\d r\,\d z),\\
		I_2&=\int_{[t, t+h) \times\R^d} G(t+h-r\,,z)\, W(\d r\,\d z).
	\end{align*}
	Clearly, $I_1$ and $I_2$ are independent since the noise $W$ is white in time when $\nu=1$.
	Thanks to \eqref{Planch} and \eqref{S}, we deduce that
	\begin{align*}
		\Var(I_1)&=\frac{c(d\,,\beta)}{(2\pi)^d}\int_0^t \d r 
			\int_{\R^d} \d \xi \left(\e^{-(t+h-r)\|\xi\|^\gamma} - 
			\e^{-(t-r)\|\xi\|^\gamma}\right)^2\|\xi\|^{\beta-d}\\
%		&=\frac{c(d\,,\beta)}{2(2\pi)^d} \int_{\R^d}  \left( 1-\e^{-h\|\xi\|^\gamma} \right)^2
%			\left( 1 - \e^{-2t\|\xi\|^\gamma} \right)
%			\frac{\d \xi}{\|\xi\|^{d+\gamma-\beta}}\\
		&=\frac{c(d\,,\beta)}{2(2\pi)^d} \int_{\R^d} 
			\left( 1-\e^{-h\|\xi\|^\gamma} \right)^2\frac{\d \xi}{\|\xi\|^{d+\gamma-\beta}}
			- \E\left( |S(t+h)-S(t)|^2 \right),
	\end{align*}
	and
	\begin{align*}
		\Var(I_2) &= \frac{c(d\,,\beta)}{(2\pi)^d} \int_t^{t+h} \d r \int_{\R^d} 
			\d\xi \, \e^{-2(t+h-r)\|\xi\|^\gamma} \|\xi\|^{\beta-d}\\
			&= \frac{c(d\,,\beta)}{2(2\pi)^d} \int_{\R^d} 
			\left( 1 - \e^{-2h\|\xi\|^\gamma}\right) \frac{\d\xi}{\|\xi\|^{d+\gamma-\beta}}.
	\end{align*}
	Then, we may combine these two expressions, use independence 
	of $I_1$ and $I_2$, and apply scaling to find that
	$\E( |X(t+h)-X(t)|^2 )
		= K_0^2 h^{1-(\beta/\gamma)} - \E( |S(t+h)-S(t)|^2 ).$
	Lemma \ref{lem:S:cont} implies that $B=\{B(t)\}_{t \ge 0}$ 
	is a continuous centered Gaussian process and the preceding 
	shows that $\E(|B(t+h)-B(t)|^2) = h^{1-(\beta/\gamma)}$.
	Therefore, $B$ is a fractional Brownian motion with index
	$\frac12 (1-\beta\gamma^{-1})$.
\end{proof}

\begin{proof}[Proof of Theorem \ref{th:crit:exp}]
	Since $X$ and $S$ are independent centered Gaussian processes, 
	we may apply conditionally Anderson's shifted-ball inequality 
	\cite{Anderson55} to see that for every $n \in \N$,
	\begin{align*}
		\P\left\{ |X(t)| \le c\, t^\alpha\ \forall t \in [1\,,n] \right\}
			&=\P\left\{ K_0^{-1}|X(t)| \le K_0^{-1} c\, t^\alpha\ \forall t \in [1\,,n] \right\}\\
		&\ge \P\left\{ K_0^{-1}|X(t)+S(t)| \le K_0^{-1} c\,t^\alpha\ \forall t \in [1\,,n] \right\}\\
		&= \P\left\{ |B(t)| \le K_0^{-1} c\,t^\alpha\ \forall t \in [1\,,n] \right\};
	\end{align*}
	see Lemma \ref{lem:X:fBm} for the last line.
	Apply a logarithm to both sides, divide by $\log n$, and let $n \to \infty$ in order to deduce
	from Proposition \ref{pr:X} that $\lambda_X(c) \le \lambda_B(c/K_0)$ for all $c>0$.
\end{proof}

\section{On simulation analysis}

One of our original goals was to devise relatively fast simulation methods for computing
the boundary crossing exponent $\lambda$ for a self-similar Gaussian process,
such as $\fBm$ or the solution to a nice linear
stochastic PDEs, that satisfies \eqref{SLND} and has good
H\"older regularity properties. At present we are only aware of
Monte-Carlo type methods. Because we are trying to estimate large-deviation
exponents, those methods tend to be computationally expensive and we could
not realistically use them on a standard personal laptop computer.
Nevertheless, we hope that good methods might exist.
In anticipation of that, we will address a theoretical matter
that we feel most, if not all, simulation analyses will need resolved. 

In order to be concrete, from now on, we assume that $X$ is an $\fBm$ with Hurst index $\alpha$, though all of what follows applies a fair bit more generally.

For every $c>0$ define
\[
	T^*_c = \inf\{ \e^n : \, n\in\N\,,\, |X(\e^n)| > c\e^{n\alpha}\}.
\]
One can readily adjust and reuse the very
same argument that was used to
prove Theorem \ref{th:main} in order to prove the following
whose details are therefore omitted.

\begin{proposition}	
	For every $c>0$ there exists a real number $\lambda^*(c)=\lambda^*(c\,,X)\ge0$ such that
	\[
		\P\{ T^*_c> \e^n \} = \e^{-n(\lambda^*(c)+\mathcal{o}(1))}\quad\text{as $n\to\infty$}.
	\]
\end{proposition}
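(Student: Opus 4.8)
The plan is to mimic the proof of Proposition~\ref{pr:X} verbatim, replacing the continuous supremum $\sup_{r\in[0,t]}|Y(r)|$ with the discrete maximum $\max_{1\le i\le n}|Y(i)|$, where $Y$ is the stationary Ornstein--Uhlenbeck process in \eqref{Y}. First I would observe that, by self-similarity and the substitution $t=\e^r$,
\[
	\P\{T^*_c>\e^n\} = \P\left\{ |Y(i)|\le c\ \forall\, i=1,\ldots,n\right\} =: g(n)\qquad [n\in\N].
\]
Then I would check that $g$ is supermultiplicative on $\N$: by stationarity of $Y$ and the Gaussian correlation inequality \cite{Royen,LatalaMatlak} applied to the events $\{|Y(i)|\le c:\,1\le i\le m\}$ and $\{|Y(i)|\le c:\,m+1\le i\le m+n\}$, one gets $g(m+n)\ge g(m)\,g(n)$. (Here one does \emph{not} even need the continuity argument from Proposition~\ref{pr:X}, because $g(n)\ge g(1)^n = \P\{|Y(1)|\le c\}^n>0$ is immediate from independence being replaced by the correlation inequality, and $Y(1)$ is a non-degenerate centered Gaussian.) Hence $-\log g$ is subadditive and positive, so by Fekete's lemma \cite{Fekete} the limit
\[
	\lambda_*(c) = -\lim_{n\to\infty}\frac1n\log g(n) = -\sup_{n\ge1}\frac1n\log g(n)
\]
exists and lies in $[0\,,-\log g(1)]\subset[0\,,\infty)$. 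This gives exactly $\P\{T^*_c>\e^n\}=\e^{-n(\lambda_*(c)+\mathcal{o}(1))}$ and $\lambda_*(c)\ge0$, which is all that is asserted.

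The only genuine point requiring a remark is that $g(n)$ is \emph{finite and positive} so that $\log g(n)$ makes sense and the Fekete machinery applies; positivity was handled above, and $g(n)\le1$ trivially. I would also note in passing (though the statement does not ask for it) that under \eqref{SLND} the successive-conditioning bound from \eqref{X:bc:UB} already shows $g(n)\le\P\{|Z|\le c/(\sqrt{\mathscr{l}_{_X}}(1-\e^{-1})^\alpha)\}^n$, so $\lambda_*(c)>0$ in that case, paralleling Proposition~\ref{pr:X}; and the fBm hypothesis guarantees \eqref{SLND} via Pitt's theorem \cite{Pitt78}.

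There is essentially no obstacle here: the argument is strictly easier than Proposition~\ref{pr:X} because we never leave the integer lattice, so we avoid the continuity-of-$Y$ step that was needed there to rule out $f(t)=0$. The one place to be slightly careful is the direction of the correlation inequality and the reduction $\P\{T^*_c>\e^n\}=g(n)$ — the event $\{T^*_c>\e^n\}$ is, by definition of $T^*_c$ as the infimum over $\{\e^k:k\in\N\}$, exactly $\{|X(\e^k)|\le c\e^{k\alpha}\ \forall k\le n\}$, which transforms to $\{|Y(k)|\le c\ \forall k\le n\}$ under $Y(k)=\e^{-k\alpha}X(\e^k)$ — and this identity, together with the fact that the rate extracted from $g$ along integers automatically controls $\P\{T^*_c>t\}$ for real $t$ since $T^*_c$ takes values only in $\{\e^n:n\in\N\}\cup\{\infty\}$, is what the phrase ``one can readily adjust and reuse the very same argument'' refers to.
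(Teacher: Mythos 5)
Your proof is correct and is exactly the adaptation the paper has in mind when it says the argument of Proposition~\ref{pr:X} can be ``readily adjusted and reused'': you pass to the stationary process $Y$, identify $\P\{T^*_c>\e^n\}$ with $g(n)=\P\{|Y(i)|\le c\ \forall i=1,\dots,n\}$, obtain supermultiplicativity from stationarity and the Gaussian correlation inequality, and conclude by Fekete's lemma, with positivity of $g$ now immediate from $g(n)\ge g(1)^n>0$. Nothing is missing.
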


Because $T^*_c \ge T_{c,\alpha}$ it follows that
$\lambda^*(c) \le\lambda(c)$ for every $c>0$. 
We do not know whether or not the inequality is
sharp, and therefore ask the following.

\begin{OP}\label{OP2}
	Is $\lambda^*(c)=\lambda(c)$ for all $c>0$?
\end{OP}

Fortunately, there are variations of this problem that we know how to solve. 
In order to describe the details, let us choose and fix an unbounded sequence 
of numbers $1\le t_1<t_2<\ldots$, and consider
the stopping times
\[
	\hat{T}_c = \inf\{ t_n : \, n\in\N\,,\, |X(t_n)| > ct_n^\alpha\}
	\qquad\forall c>0.
\]
Since $\hat{T}_c \ge T_{c,\alpha}$, we always have
\begin{equation}\label{LB}
	\P\{ \hat{T}_c > t_n \} \ge \P\{ T_{c,\alpha} > t_n\}.
\end{equation}
The following is a way to turn the preceding inequality in reverse direction.

\begin{lemma}\label{lem:discrete}
	Suppose that $t_{n+1}-t_n = \mathcal{o}(t_n)$ as $n\to\infty$.
	Then, there exists a constant $K >1$ with the following property:
	Whenever $m(n)$ is an integer in $(1\,,n)$ for every $n\in\N$,
	\[
		\P\{\hat{T}_c > t_n\}
		\le \P\{ T_{c+\varepsilon,\alpha} > t_n/t_{m(n)}\}
		+ K\sum_{j=m(n)}^n \exp\left(- \frac{\varepsilon^2}{K}
		\left|\frac{t_j}{t_{j+1}-t_j}\right|^{2\alpha} \right),
	\]
	for every $c,\varepsilon>0$ and $n\in\N$.
\end{lemma}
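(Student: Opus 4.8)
The plan is to convert the discrete non-crossing event $\{\hat{T}_c>t_n\}$ into a \emph{continuous} non-crossing event over $[t_{m(n)},t_n]$ by ``filling in'' each gap $[t_j,t_{j+1}]$ with the oscillation of $X$, paying an exponentially small price per gap via Gaussian concentration. First I would record that $\{\hat{T}_c>t_n\}=\{|X(t_j)|\le ct_j^\alpha\text{ for all }1\le j\le n\}$ and introduce the bad-oscillation events
\[
	B_j:=\Big\{\sup_{u\in[t_j,t_{j+1}]}|X(u)-X(t_j)|>\varepsilon\,t_j^\alpha\Big\},\qquad m(n)\le j\le n-1.
\]
On $\{\hat{T}_c>t_n\}\setminus\bigcup_{j=m(n)}^{n-1}B_j$, for each $s\in[t_{m(n)},t_n]$ one picks $j$ with $t_j\le s\le t_{j+1}$ and estimates $|X(s)|\le|X(t_j)|+|X(s)-X(t_j)|\le(c+\varepsilon)t_j^\alpha\le(c+\varepsilon)s^\alpha$, so that $\sup_{s\in[t_{m(n)},t_n]}\big(|X(s)|/s^\alpha\big)\le c+\varepsilon$ on that event. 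A union bound then gives
\[
	\P\{\hat{T}_c>t_n\}\le\P\Big\{\sup_{s\in[t_{m(n)},t_n]}\frac{|X(s)|}{s^\alpha}\le c+\varepsilon\Big\}+\sum_{j=m(n)}^{n-1}\P(B_j).
\]
For the leading term I would apply $\alpha$-self-similarity with scaling factor $r=t_{m(n)}$ (legitimate since $t_{m(n)}\ge t_1\ge1$): as $u\mapsto t_{m(n)}^{-\alpha}X(t_{m(n)}u)$ has the law of $X$, the substitution $s=t_{m(n)}u$ shows this probability equals $\P\{|X(u)|\le(c+\varepsilon)u^\alpha\ \forall u\in[1,t_n/t_{m(n)}]\}=\P\{T_{c+\varepsilon,\alpha}>t_n/t_{m(n)}\}$, exactly the scaling identity already used for Proposition \ref{pr:X}. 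Adjoining the harmless nonnegative term $j=n$ to the residual sum puts it in the form stated in the lemma.

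It then remains to bound $\P(B_j)$ by $K\exp\!\big(-\frac{\varepsilon^2}{K}a_j^{2\alpha}\big)$, with $a_j:=t_j/(t_{j+1}-t_j)$ and a single constant $K>1$. Since $X$ is an $\fBm$, the increment process $u\mapsto X(t_j+u)-X(t_j)$ has the law of $\{X(u)\}$; combined with self-similarity this yields $\sup_{u\in[t_j,t_{j+1}]}|X(u)-X(t_j)|\overset{d}{=}(t_{j+1}-t_j)^\alpha\sup_{u\in[0,1]}|X(u)|$, whence $\P(B_j)=\P\{\sup_{u\in[0,1]}|X(u)|>\varepsilon a_j^\alpha\}$. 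By Fernique's theorem \cite{Fernique}, $C_0:=\E\sup_{u\in[0,1]}|X(u)|<\infty$, and Borell's inequality \cite{Borell} gives $\P\{\sup_{[0,1]}|X|>C_0+z\}\le2\exp(-z^2/(2\Var[X(1)]))$ for all $z>0$. A short two-case analysis --- (i) $\varepsilon a_j^\alpha\le2C_0$, using only $\P(B_j)\le1$; and (ii) $\varepsilon a_j^\alpha>2C_0$, using $(\varepsilon a_j^\alpha-C_0)^2\ge\varepsilon^2a_j^{2\alpha}/4$ --- then yields the bound with a constant $K=K(\alpha)$ depending only on $C_0$ and $\Var[X(1)]$, valid for every $j$ and every $\varepsilon>0$. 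Summing over $m(n)\le j\le n-1$ (and, if one wishes, the extra $j=n$ term) completes the proof; the hypothesis $t_{n+1}-t_n=\mathcal{o}(t_n)$ is precisely what forces $a_j\to\infty$ and thus makes this sum an effective remainder rather than a vacuous one.

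The only genuinely delicate point is step (ii)/(i): extracting a single constant $K$ that works simultaneously for \emph{all} $\varepsilon>0$ and all gap ratios $a_j\in(0,\infty)$ out of Borell's inequality, whose shift by $C_0$ is problematic when $\varepsilon a_j^\alpha$ is comparable to $C_0$; this is what the case split is designed to handle. Everything else --- the set inclusion, the union bound, and the two self-similarity rescalings --- is routine bookkeeping.
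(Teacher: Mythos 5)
Your argument is correct, and its skeleton is the same as the paper's: rewrite $\{\hat{T}_c>t_n\}$ as the discrete non-crossing event, split off the bad-oscillation events $B_j$ on the gaps $[t_j,t_{j+1}]$ for $m(n)\le j\le n-1$, observe that off $\bigcup_j B_j$ the discrete event forces $\sup_{s\in[t_{m(n)},t_n]}(|X(s)|/s^\alpha)\le c+\varepsilon$, and convert that to $\P\{T_{c+\varepsilon,\alpha}>t_n/t_{m(n)}\}$ by $\alpha$-self-similarity. Where you genuinely diverge is in bounding $\P(B_j)$. The paper treats $Y=X(\cdot)-X(t_j)$ as a generic Gaussian process with $\E(|Y(t)-Y(s)|^2)=|t-s|^{2\alpha}$, runs Dudley's entropy bound to get $\E\sup_{[t_j,t_{j+1}]}|Y|\le C_2(t_{j+1}-t_j)^\alpha$, and then applies concentration; that route needs $j$ large enough that $C_2(t_{j+1}-t_j)^\alpha\le(\varepsilon/2)t_j^\alpha$, which is where the hypothesis $t_{j+1}-t_j=\mathcal{o}(t_j)$ enters their derivation. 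You instead use the stationarity of increments of $\fBm$ together with self-similarity to obtain the exact identity $\sup_{[t_j,t_{j+1}]}|X-X(t_j)|\overset{d}{=}(t_{j+1}-t_j)^\alpha\sup_{[0,1]}|X|$, and then Fernique--Borell plus your two-case split (the split is carried out correctly: one only needs $K\log K\ge 4C_0^2$ in the small-threshold case and $K\ge\max(2\,,8\Var[X(1)])$ in the Borell case) yields a single constant $K$ valid for \emph{every} $j$ and $\varepsilon$, with no ``large $j$'' caveat. This is legitimate here because the section has fixed $X$ to be an $\fBm$; what you lose is that the paper's entropy argument is the one that survives the advertised generalization to self-similar Gaussian processes satisfying only a two-sided increment bound, while your distributional identity is specific to stationary-increment processes. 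The bookkeeping issues (the harmless $j=n$ term, the monotonicity step $t_j^\alpha\le s^\alpha$, and the rescaling by $t_{m(n)}\ge 1$) are all handled correctly.
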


\begin{proof}
	Clearly, $\P\{\hat{T}_c>t_n\}$ is equal to
	\begin{align}\notag
		&\P\left\{ |X(t_j)|\le  ct_j^\alpha\ \forall j=1,\ldots,n\right\}\\
			&\le \P\left\{|X(t)|\le (c+\varepsilon)t^\alpha\ \forall t\in[t_{m(n)}\,,t_n]\right\}
			\label{T1}\\\notag
		&\textstyle\quad + \P\left\{\exists j\in\{m(n)\,,\ldots,n\}:\ 
			\sup_{t\in[t_j,t_{j+1}]} |X(t)-X(t_j)| \ge \varepsilon t_j^\alpha\right\}.
	\end{align}
	The self-similarity of $X$ implies that the first term on the right-hand side of \eqref{T1}
	is equal to
	\[
		\P\{|X(t)|\le (c+\varepsilon)t^\alpha\ 
		\forall t\in[1\,,t_n/t_{m(n)}]\}
		=\P\{T_{c+\varepsilon,\alpha}>t_n/t_{m(n)}\}.
	\]
	Therefore, it suffices to study the second term on the right-hand side of \eqref{T1}.
	
	Choose and fix an integer $j\ge1$. The Gaussian process $Y(t) = X(t)-X(t_j)$ 
	[$t_j \le t\le t_{j+1}$] satisfies
	$\E( |Y(t)-Y(s)|^2) =  |t-s|^{2\alpha}$
	uniformly for every integer $j\ge1$ and all $s,t\in[t_j\,,t_{j+1}]$.
	Let $N$ denote the metric entropy of $[t_j\,,t_{j+1}]$ with respect to the
	metric $d(t\,,s)=\|Y(t)-Y(s)\|_2$ for every $s,t\in[t_j\,,t_{j+1}]$. That is,
	$N(r)$ denotes the minimum number of closed $d$-balls of radius $r>0$
	needed to cover $[t_j\,,t_{j+1}]$. The preceding
	shows that $N(r) \le (t_{j+1}-t_j) r^{-1/\alpha}$ for all $r\in (0\,,(t_{j+1}-t_j)^\alpha )$.
	It also shows that the $d$-diameter $D$ of $[t_j\,,t_{j+1}]$ satisfies $D\le (t_{j+1}-t_j)^\alpha$.
	It follows from metric entropy arguments -- see Dudley
	\cite{Dudley1967} -- and the definition of the process $Y$ that 
	there exist universal numbers $C_1,C_2>0$ independently of $j$ such that
	$\E\sup_{t\in[t_j,t_{j+1}]}|X(t)-X(t_j)| $ is equal to
	\begin{align}\begin{split}\label{Dudley}
		\E\sup_{t\in[t_j,t_{j+1}]}|Y(t)| 
		&\le C_1 \int_0^{C_1(t_{j+1}-t_j)^\alpha} |\log_+((t_{j+1}-t_j) r^{-1/\alpha})|^{1/2}\,\d r\\
		&\le C_2 (t_{j+1}-t_j)^\alpha,
	\end{split}\end{align}
	where $\log_+(x)=\log(\e+x)$ for all $x\ge0$. 
	Thanks to concentration of measure
	(see for example Ledoux \cite[Chapter 2]{Ledoux}),
	for all $z>0$,
	\begin{align*}
		&\textstyle\P\left\{ \sup_{t\in[t_j,t_{j+1}]}|X(t)-X(t_j)| 
			\ge C_2 (t_{j+1}-t_j)^\alpha + z \right\}\\
		&\le 2\exp\left(-\frac{z^2}{2\sup_{t\in[t_j,t_{j+1}]}
			\E(|X(t)-X(t_j)|^2)} \right).
%			\qquad\forall z>0.
	\end{align*}
	This and \eqref{Dudley}
	implicitly yield a constant $C_3$ --  independent of $(j\,,z)$ -- such that
	$\sup_{t\in[t_j,t_{j+1}]}\E(|X(t)-X(t_j)|^2) \le 
	C_3 (t_{j+1}-t_j)^{2\alpha}$
	and hence, for every $z>0$:
	\[
		\P\left\{ \sup_{t\in[t_j\,,t_{j+1}]}|X(t)-X(t_j)| 
		\ge C_2 (t_{j+1}-t_j)^\alpha + z \right\}
		\le 2\exp\left(-\frac{z^2}{2C_3(t_{j+1}-t_j)^{2\alpha}} \right).
	\]
	In particular, if $j$ is large enough to ensure that 
	$t_{j+1}-t_j\le (\varepsilon/(2C_2))^{1/\alpha} t_j$, then
	\begin{align*}
		&\textstyle\P\left\{
			\sup_{t\in[t_j,t_{j+1}]} |X(t)-X(t_j)| \ge \varepsilon t_j^\alpha\right\}\\
		&\textstyle\le\P\left\{ \sup_{t\in[t_j,t_{j+1}]}|X(t)-X(t_j)| 
			\ge C_2 (t_{j+1}-t_j)^\alpha + (\varepsilon/2)t_j^\alpha \right\}\\
		&\le 2\exp\left(-\frac{\varepsilon^2}{8C_3}
			\left|\frac{t_j}{t_{j+1}-t_j}\right|^{2\alpha} \right).
	\end{align*}
	We can now appeal to \eqref{T1} and the above, using a union bound,
	in order to deduce the lemma from the facts that $t_{j+1}-t_j=\mathcal{o}(t_j)$
	and $m(n)\to\infty$ as $j,n\to\infty$.
\end{proof}

One can apply Lemma \ref{lem:discrete} to produce a number of examples
of sequences $\{t_j\}_{j=1}^\infty$ along which,
\begin{equation}\label{eq:lim}
	\P\left\{ |X(t_j)|\le  ct_j^\alpha\ \forall j=1,\ldots,n\right\}
	= t_n^{-\lambda(c)+\mathcal{o}(1)}\qquad\text{as $n\to\infty$},
\end{equation}
for the same $\lambda$ as in Theorem \ref{th:main}. Recall that $X$ denotes a
$\fBm$ throughout this discussion.

\begin{example}
	For a simple example consider $t_n=n$ and $m(n)=\lfloor n^\delta\rfloor$
	where $\delta\in(0\,,1)$. Lemma \ref{lem:discrete} and Theorem \ref{th:main}
	together yield the following: For every $\varepsilon,c>0$,
	\begin{align*}
		&\P\left\{ |X(j)|\le  cj^\alpha\ \forall j=1,\ldots,n\right\}
			\le n^{-(1-\delta)\lambda(c+\varepsilon)+\mathcal{o}(1)}
			+ (K+\mathcal{o}(1))
			\sum_{n^\delta\le j\le n} \e^{-\varepsilon^2j^{2\alpha}/K}\\
		&\hskip0.6in
			=	n^{-(1-\delta)\lambda(c+\varepsilon)+\mathcal{o}(1)}
			+ \exp\left(-\frac{(\varepsilon^2+\mathcal{o}(1))n^{2\alpha\delta}}{K}
			\right)=	n^{-(1-\delta)\lambda(c+\varepsilon)+\mathcal{o}(1)},
	\end{align*}
	as $n\to\infty$. In light of \eqref{LB} and Theorem \ref{th:main},
	the preceding shows that the sequence defined by $t_j=j$
	satisfies \eqref{eq:lim}.
\end{example}

\begin{example}
	Open Problem \ref{OP2} asks whether \eqref{eq:lim} holds
	for the sequence $t_n=\exp(n)$. Among other things, 
	this example will imply that
	if $\alpha\approx1$ then \eqref{eq:lim} can hold for 
	sequences whose growth rate fall barely below $\exp(n^{2/3})$.
	Consider $q\in(0\,,1)$, and define
	$t_n = \exp ( n^q)$ for every $n\in\N$. Since
	$\frac{t_j}{t_{j+1}-t_j} =\frac{j^{1-q}}{q +\mathcal{o}(1)}$
	as $j\to\infty$,
	it follows from Lemma \ref{lem:discrete}
	and Theorem \ref{th:main} that, as long as $m(n)\to\infty$ strictly monotonically
	and $m(n)=\mathcal{o}(n)$ when $n\gg1$,
	\begin{align*}
		&\P\left\{ |X(t_j)|\le  ct_j^\alpha\ \forall j=1,\ldots,n\right\}\\
		&\le \e^{-(\lambda(c+\varepsilon) + \mathcal{o}(1))
			\left[n^q-m(n)^q\right]}
			+ \sum_{j=m(n)}^n \exp\left(-
			\frac{(\varepsilon^2+\mathcal{o}(1))j^{2(1-q)\alpha}}{K} \right)\\
		&\le \e^{-(\lambda(c+\varepsilon) + \mathcal{o}(1))
			\left[n^q-m(n)^q\right]}
			+ \exp\left(-
			\frac{(\varepsilon^2+\mathcal{o}(1))m(n)^{2(1-q)\alpha}}{K} \right).
	\end{align*}
	We want the first term on the right-hand side to dominate the second; that is,
	$n^q-m(n)^q < m(n)^{2(1-q)\alpha}$ and $m(n)\ll n.$
	Try $m(n)=n^p$ where $0<p<1$ in order to see that this happens if and only if
	$q < \frac{2\alpha p}{1+2\alpha p}.$
	Since $p$ can be as close to one as we would like,
	it follows that if $q < 2\alpha/(1+2\alpha)$, then \eqref{eq:lim}
	is satisfied for the sequence defined by $t_n=\exp(n^q)$.
\end{example}

\begin{example}
	For our third and final example we consider the sequence defined by
	$t_n = \exp( |\log n|^q),$
	where $q>1$ is fixed. Since
	$\frac{t_{j+1}-t_j}{t_j} = \frac{(q+\mathcal{o}(1))|\log j|^{q-1}}{j}$
	as $j\to\infty$,
	we may apply Lemma \ref{lem:discrete} with $m(n)=n^\delta$, where $\delta>0$
	is fixed but as small as we like, in order to see that the sequence $t_n=\exp( |\log n|^q)$
	satisfies \eqref{eq:lim} when $q>1$.
\end{example}

\bigskip

\noindent
{\bf Acknowledgments.} 
The authors thank Professor Guangqu Zheng, and two anonymous
referees, for making several helpful 
remarks that led to improvements in the paper.

\bibliography{fBmBC}
\bibliographystyle{amsplain}

\end{document}